\documentclass[11pt,a4paper]{article}

\usepackage{titlesec,titletoc}

\usepackage[margin=0.75in]{geometry}
\usepackage{url}
\usepackage{fancyhdr}
\usepackage{setspace} 
\usepackage{appendix} 
\usepackage{enumerate}
\usepackage{calc}
\usepackage{xspace}
\usepackage{caption}
\usepackage{subcaption}
\usepackage{placeins}  
\usepackage{pdflscape} 
\usepackage{longtable}
\usepackage{xcolor,colortbl,booktabs}
\usepackage{parskip}  
\usepackage{authblk}  
\usepackage{bold-extra} 
\makeatletter
\def\thm@space@setup{%
\thm@preskip=\parskip \thm@postskip=0pt
}
\makeatother

\usepackage{amsmath}
\usepackage{amssymb}
\usepackage{amsthm}
\usepackage{mathtools}

\usepackage[
    backend=biber,
    style=ieee,
    natbib=true,
    isbn=false,
    url=false, 
    doi=false,
    eprint=false,
]{biblatex}
\addbibresource{bibfile.bib}

\usepackage{graphicx}
\graphicspath{ {./figures/} }


\newcommand{\mbf}{\mathbf}
\newcommand{\mcf}{\mathcal}
\newcommand{\mbb}{\mathbb}

\newcommand{\vLHSCB}{$\nu$--LHSCB\xspace}


\newcommand{\tpose}{^{\!\top\!}}

\renewcommand{\Re}{\mbb{R}}

\newcommand{\eqdef}{:=}

\newcommand{\norm}[1]{\left\| #1 \right\|}
\newcommand{\set}[2]{\left\{ #1\ \left| \ #2 \right. \right\}}


\newcommand{\interior}{\mathop{\operatorname{int}}}
\newcommand{\trace}{\mathop{\operatorname{tr}}}

\newcommand{\svec}{{\operatorname{svec}}}
\newcommand{\mat}{{\operatorname{mat}}}
\newcommand{\innerprod}[2]{{#1}\tpose{#2}}
\newcommand{\Cpp}{C\texttt{++}\xspace}
\newcommand{\gradient}[1]{\nabla{#1}}

\hfuzz=1000pt 


\newtheorem{thm}{Thoerem}[section]
\newtheorem{lem}[thm]{Lemma}

\newtheorem{definition}[thm]{Definition}

\theoremstyle{remark}

\newcommand{\OptProblem}[5][]{
  \begin{align}\label{#1}
    \begin{array}{rl}
    #2\limits_{#3}&\hspace{-0ex}#4\vspace{1ex}\\
    \text{subject to:}&\hspace{-1ex}
    \begin{gathered}[t]
      #5
    \end{gathered}
  \end{array}
\end{align}
}

\newcommand{\MinProblem}[4][]
{\OptProblem[#1]{\min}{#2}{#3}{#4}}

\newcommand{\MaxProblem}[4][]
{\OptProblem[#1]{\max}{#2}{#3}{#4}}

\newcommand{\UnconstrainedOptProblem}[4][]{
  \begin{align}\label{#1}
    \begin{array}{rl}
    #2\limits_{#3}&\hspace{-0ex}#4\vspace{1ex}
  \end{array}
\end{align}
}

\newcommand{\UnconstrainedMinProblem}[3][]
{\UnconstrainedOptProblem[#1]{\min}{#2}{#3}}


\newcommand{\coneNN}[1]{\mathbb{R}^{#1}_{+}}
\newcommand{\coneSOCP}[1]{\mathcal{Q}_{#1}}
\newcommand{\coneSDP}[1]{\mathcal{S}_{#1}}
\newcommand{\coneEXP}{\mathcal{K}_{\textrm{exp}}}
\newcommand{\conePOW}{\mathcal{K}_{\textrm{pow}}}

\usepackage{xcolor,calc}




\newcommand{\detailtablecaption}{FOO}

\definecolor{BenchHighlight}{rgb}{0.8,0.8,0.9}
\newcommand{\winner}{\cellcolor{BenchHighlight}}


\title{\LARGE  {\bfseries Clarabel}: \textbf{An interior-point solver for conic programs with quadratic objectives}}
\author{Paul J.\ Goulart, Yuwen Chen}  

\affil{Department of Engineering Science, University of Oxford,
Oxford, UK}

\date{May 21, 2024}
\pagenumbering{arabic}

\begin{document} 
\maketitle

\begin{abstract}
We present a general-purpose interior-point solver for convex optimization problems with conic constraints.   Our method is based on a homogeneous embedding method originally developed for general monotone complementarity problems and more recently applied to operator splitting methods, and here specialized to an interior-point method for problems with quadratic objectives.   We allow for a variety of standard symmetric and non-symmetric cones, and provide support for chordal decomposition methods in the case of semidefinite cones.   We describe the implementation of this method in the open-source solver \textsc{Clarabel}, and provide a detailed numerical evaluation of its performance versus several state-of-the-art solvers on a wide range of standard benchmarks problems.  \textsc{Clarabel} is faster and more robust than competing commercial and open-source solvers across a range of test sets, with a particularly large performance advantage for problems with quadratic objectives. \textsc{Clarabel} is currently distributed as a standard solver for the Python \textsc{CVXPY} optimization suite.  
\end{abstract}



\section{Introduction}

We consider throughout the following convex conic optimization problem:
\MinProblem[eqn:primal]{x, s}{\frac{1}{2}x\tpose P x + q\tpose x}{
\begin{aligned}[t]
&Ax + s = b \\[1ex]
&s \in \mcf{K},
\end{aligned} \tag{$\mcf P$}
}
with decision variables $x\in \Re^n$ and $s\in \Re^m$, and problem data
$A\in \Re^{m\times n}$, $b\in \Re^m$, $q \in \Re^n$ and $P \in \Re^{n\times n}$.   We assume that $P$ is symmetric and positive semidefinite (possibly zero) and that the set $\mcf{K}$ is a closed and convex cone. We will denote the optimal value of this problem as $p^*$ and an optimizer (when it exists) as $(x^*,s^*)$.

It can be shown that the problem dual to \ref{eqn:primal} is
\MaxProblem[eqn:dual]{x,z} {-\frac{1}{2}x\tpose P x - b\tpose z}{
\begin{aligned}[t]
  &P x  +  A\tpose z = -q, \\[1ex]
  &z \in \mcf{K}^*
\end{aligned} \tag{$\mcf{D}$}
}
where $\mathcal K^*$ is the dual cone of $\mathcal K$.   We will denote its optimal value as $d^*$ and an optimizer (when it exists) as $(x^*,z^*)$.    We will assume throughout that strong duality holds between \ref{eqn:primal} and \ref{eqn:dual}, i.e.\ that $p^* = d^*$.

The problem \ref{eqn:primal} is very general and can model most convex optimization problems of practical interest.  Such problems appear in a huge range of applications including constrained optimal control \cite{Borrelli:MPCbook} and moving horizon estimation \cite{Allan2019:MHE}, limit analysis of engineering structures \cite{Martin:LowerSOCP,Martin:UpperSOCP} and fluid flows \cite{Goulart:FluidSOS}, image processing \cite{Combettes:ImageRecovery}, support vector machines \cite{Cortes:SVM:1995}, lasso problems \cite{Tibshirani:1996,Candes:2008}, circuit design \cite{Boyd:GeoProg}, portfolio optimization \cite{cornuejols2006, JOFI:JOFI1525}, and many others.   
The numerical solution of convex problems in the form \ref{eqn:primal} also underpins many nonconvex optimization methods, including those based on sequential quadratic programming \cite[Ch.~18]{NW06}\cite{boggs:tolle:SQP:1995} and branch-and-bound methods within mixed-integer methods \cite{Bertsimas:IntegerOpt,BandB:Survey}. 

\subsection{Solution methods}
Considerable research effort has focused on the development of solution techniques for \ref{eqn:primal} over many decades.   If the cone $\mathcal{K}$ is the positive orthant, then \ref{eqn:primal} represents a {linear program} (LP) if $P = 0$ or a quadratic program (QP) otherwise.   This case in particular has a long history, dating to the landmark work of Kantorovich \cite{Kantorovich:1939} and Koopmans \cite{Koopmans:1942, Koopmans:1949} in the LP case, and Frank and Wolfe in the QP case \cite{Frank:1956}.   Of particular significance was the development of numerical solution methods for such problems, and in particular \emph{simplex} or active-set methods for the solution of LP \cite{Dantzig:1963} and QP \cite{Wolfe:1959} instances.  Active set methods still form the basis of many modern LP and QP solvers, including commercial solvers such as Gurobi \cite{gurobi} and open-source solvers such as qpOASES \cite{qpOASES} and HiGHS \cite{HiGHS}.   

However, since active set methods are based on the iterative updating of a basis of active constraints and rely heavily on the polyhedral structure of systems of linear equalities and inequalities, they do not generalize well to the case of non-polyhedral constraints.    Recent advances for more general \emph{conic programs}, i.e.\ optimization problems where the set $\mathcal{K}$ is a more general convex cone, have therefore focussed instead on either operator splitting methods such as the alternating direction method of multipliers (ADMM) \cite{boyd2011distributed} or on interior-point methods \cite{Wright97}.  ADMM methods in particular are attractive in cases where only moderate accuracy is required, since ADMM is known to be prone to poor `tail convergence' at high precision \cite{He:Yuan:2012}.   Nevertheless, ADMM based methods are widely used in practice for both their scalability and simplicity of implementation and form the basis of several open-source solvers for \ref{eqn:primal}, including the open source solvers OSQP \cite{OSQP}, SCS \cite{SCS} and COSMO \cite{COSMO}.
 
\paragraph{Primal-dual interior-point methods}  In this work we focus instead on primal-dual {interior-point} methods \cite{nemirovski:todd:2008}, since such methods are known to perform better than operator splitting methods at high precision and, unlike active set methods, can support general conic constraints.   A significant milestone in the development of solution algorithms for \ref{eqn:primal} was Karmarkar's method \cite{Karmarkar:1984}, which provided the first practically useful polynomial-time solution method for linear programs,
followed by the {path following} method of Renegar \cite{Renegar:1988}.   

An advantage of interior-point methods is that they can be extended to the case of general conic constraints $\mathcal{K}$ in \ref{eqn:primal}.   Of particular importance in the development of efficient conic interior-point solvers was the development of solvers based on cones and associated barrier functions that have the \emph{self-scaling} property~\cite{Nesterov97,Nesterov98}.   The family of self-scaled cones include those that are generally considered the most important constraint types in practice, i.e.\ the nonnegative orthant, second-order cone and positive semidefinite (PSD) cone.   Conic problems defined over self-scaled cones can be solved efficiently using primal-dual interior-point methods based on the \emph{Nesterov-Todd (NT) scaling} strategy~\cite{Nesterov97,Nesterov98}.   

Nonsymmetric cones that do \emph{not} have the self-scaling property have been the subject of increasing recent interest for interior-point methods~\cite{Chares09,Nesterov12}. The exponential cone and the power cone are the two most studied and are supported in commercial solvers such as Mosek~\cite{Dahl21}.  
In certain cases it is also attractive to formulate problems in terms of more exotic nonsymmetric cones that can be solved more efficiently by exploiting their special structure~\cite{Papp19,Hypatia,DDS}.   Since the NT scaling method does not apply to nonsymmetric cones, alternative strategies have been proposed based on either primal or dual iterates~\cite{Skajaa15,Serrano15,Hypatia} or both primal and dual iterates together~\cite{Dahl21,DDS}.

Alongside these developments, a further innovation was the development of the \emph{homogeneous self-dual embedding} (HSDE)~\cite{Ye94}.  This method embeds a conic program with linear objectives into a slightly larger feasibility problem, which is \emph{always} feasible regardless of the feasibility of the original problem.  Once a feasible point is found, it can be used to construct either an optimizer for the original problem or a primal or dual certificate of infeasibility.   
The HSDE is at the heart of many modern interior-point solvers including CVXOPT \cite{cvxopt}, ECOS \cite{ecos}, SCS \cite{SCS} and MOSEK \cite{MOSEK}.   A downside of this approach is that it does not naturally support problems with quadratic objectives, since the standard embedding method applied to such cases does not produce a self-dual problem.  

\paragraph{Monotone complementarity problems (MCPs)}

Extensions of the HSDE method were quickly developed for the linear complementary problem (LCP)~\cite{Ye97}, and later for the more general monotone complementary problem (MCP)~\cite{Andersen1999} with linear constraints.    The homogeneous embedding for MCPs was also extended to problems defined over more general symmetric cone constraints~\cite{Yoshise07,Yoshise07-a}.    

The practical performance of the homogeneous embedding in the case of convex quadratically constrained quadratic programming (QCQP) problems was investigated in~\cite{Meszaros15}.   More recently, \cite{ODonoghue2021} proposed a maximal monotone operator specialized for the LCP based on the monotone operator in~\cite{Andersen1999} to produce a first-order solver for problems with convex quadratic objectives and general conic constraints.   

\section{Algorithmic approach}

In this paper we adopt the approach of \cite{Meszaros15,ODonoghue2021} by specializing the homogeneous embedding method of \cite{Andersen1999} to convex problems with quadratic objectives.  We will use this embedding as the basis for a general purpose primal-dual interior-point solver that handles both symmetric and nonsymmetric cone constraints.   

The standard Karush-Kuhn-Tucker (KKT) conditions for problem \ref{eqn:primal} are
\begin{equation}\label{eqn:KKT}
\begin{aligned}
  Ax + s &= b \\
  Px + A\tpose z &= -q\\
  s\tpose z &= 0\\
    (s,z) &\in \mcf{K} \times \mcf{K}^*.
\end{aligned}
\end{equation}
We define the duality gap $\gamma$ as
\[
\begin{aligned}
\gamma \eqdef& ~ \left(\frac{1}{2}x\tpose P x + q\tpose x\right) - \left(-\frac{1}{2}x\tpose Px - b\tpose z\right) \\[1ex]
       =& ~x\tpose P x + q\tpose x + b\tpose z = s\tpose z,
\end{aligned}
\]
where the final equality follows from substitution in the KKT conditions \eqref{eqn:KKT}.  The duality gap $\gamma$ is nonnegative for any feasible point since $(s,z)$  are constrained to the dual pair of cones $(\mathcal{K},\mathcal{K}^*)$, and is zero at an optimal point $(x^*,s^*,z^*)$ when strong duality holds.   

Taking problems \ref{eqn:primal} and \ref{eqn:dual} together with an objective minimizing the duality gap, we can rewrite our problem in the form 
\MinProblem[eqn:PD_no_embedding]{
    (x, s, z)
  }{s\tpose z}%
  {\begin{aligned}[t]
    &x\tpose P x + q\tpose x + b\tpose z = 0 \\[1ex]
  &P x + A\tpose z + q = 0 \\[1ex]
  &A x + s - b = 0 \\[1ex]
  &(s, z) \in \mcf{K} \times \mcf{K^*}.
  \end{aligned} 
  }

  This problem is of course infeasible whenever either \ref{eqn:primal} or \ref{eqn:dual} is infeasible or the pair is not strongly dual.

  If we define nonnegative scalars $(\tau,\kappa)$ and apply a change of variables $x \to x/\tau$, $z \to z/\tau$ and $s\to s/\tau$, then we can rewrite the feasibility problem~\eqref{eqn:PD_no_embedding} as 
  \MinProblem[eqn:PD_homogeneous]{
  (x, s, z, \tau, \kappa)
  }{\innerprod{s}{z} + \tau\kappa}%
  {\begin{aligned}[t]
    &\frac{1}{\tau}x\tpose P x + q\tpose x + b\tpose z = -\kappa \\[1ex]
  &Px + A\tpose z + q\tau = 0 \\[1ex]
  &Ax + s - b\tau = 0 \\[1ex]
  &(s,z,\tau,\kappa) \in \mcf{K} \times \mcf{K^*} \times \Re_+ \times \Re_+.
  \end{aligned}  \tag{$\mcf H$}
  }

  The problem~\ref{eqn:PD_homogeneous} is equivalent to the widely-used homogeneous self-dual embedding (HSDE) \cite{Nesterov1999} in the special case $P = 0$.   The most common approach to solving \ref{eqn:primal} is therefore to eliminate the quadratic term in the objective function, replacing it with an epigraphical upper bound and an additional second-order cone constraint in the objective \cite{Lobo:1998}.   This amounts to rewriting \ref{eqn:primal} as 

\MinProblem[eqn:primal_linear]{x, s, w}%
{w + q\tpose x}%
{
  \begin{aligned}[t]
    &Ax + s = b \\[1ex]
    &\norm{\begin{bmatrix}
    2P^\frac{1}{2}x \\ w - 2
    \end{bmatrix}}_2 \le w + 2 \\[1ex]
    &s \in \mcf{K}, ~w \ge 0,
    \end{aligned} 
}
  
which is a conic optimization problem with a purely linear cost and an additional second-order cone constraint.

 If we instead keep the quadratic term in the objective then~\ref{eqn:PD_homogeneous} remains homogeneous but is no longer self-dual, in addition to featuring the seemingly awkward $\frac{1}{\tau}x\tpose P x$ term in the first equality.  Our approach will nevertheless amount to a direct solution of \ref{eqn:PD_homogeneous} using a primal-dual type interior-point method, and we will show that significant performance improvements are possible with this approach in many cases relative to standard HSDE-based interior-point methods.

\bigskip 

\subsection{Existence of solutions and infeasibility detection}\label{sec:infeasibility}  

Before proceeding to the details of our algorithm, we address briefly the existence and interpretation of solutions to \ref{eqn:PD_homogeneous}.
As in the case of the HSDE, an advantage of our reformulation is that solutions will produce either a solution to the original problem \ref{eqn:primal} or \ref{eqn:dual}, or (asymptotically) a certificate of either primal or dual infeasibility.      We first define the following sets of infeasibility certificates for \ref{eqn:primal} and \ref{eqn:dual}:
\begin{align*}
  \mathbb{P} &\eqdef \set{x}{~Px = 0,\,Ax \in \mcf{K},\,\,\,\innerprod{q}{x} < 0} \\ 
  \mathbb{D} &\eqdef \set{z}{ A\tpose z = 0,\,~~\,z\in \mcf{K}^*,\, \innerprod{b}{z} < 0}.
\end{align*}
The problem \ref{eqn:primal} is infeasible if and only if there exists some $\bar x \in \mathbb{D}$, while the problem \ref{eqn:dual} is infeasible if and only if there exists some $\bar z \in \mathbb{P}$.   Similar conditions have appeared in \cite{COSMO,Banjac2019,ODonoghue2021} in the context of conic optimization with quadratic objectives.


Solving the problem \ref{eqn:PD_homogeneous} amounts to finding a root of the nonlinear system of equations 
\begin{equation}\label{eqn:root_finding_G}
G(x,z,s,\tau,\kappa) := 
\begin{bmatrix}
  0\\s\\ \kappa 
\end{bmatrix}
-
\begin{bmatrix}
  \hphantom{+}P  & A\tpose & q \\
  -A & 0 & b\\
  -q\tpose & -b\tpose & 0
\end{bmatrix}
\begin{bmatrix}
  x \\ z \\ \tau 
\end{bmatrix}
 + 
 \begin{bmatrix}
  0 \\ 0 \\
  \frac{1}{\tau}x\tpose P x
 \end{bmatrix} = 0
\end{equation}
subject to the conic constraint 
\[
  (x,z,s,\tau,\kappa) \in \mathcal{C} :=  \Re^n \times \mathcal{K} \times \mathcal{K}^*\times\Re_+ \times \Re_+.
\]
These conditions are identical to those in \cite[(4.1)]{ODonoghue2021}, where they were developed in terms of specialization of the homogeneous embedding of monotone complementary problems developed in \cite{Andersen1999} to an LCP.

We can define a more compact notation $v := (x,z,s,\tau,\kappa)$ for convenience, so that \ref{eqn:PD_homogeneous} is equivalent to 
\MinProblem[eqn:PD_homogeneous_compact]{v}
{\innerprod{s}{z} + \tau\kappa}%
{\begin{aligned}[t]
G(v) = 0,\quad v \in \mathcal{C}.
\end{aligned} 
}
As in \cite{Andersen1999}, we say that the problem \ref{eqn:PD_homogeneous} (equivalently \eqref{eqn:PD_homogeneous_compact}) is \emph{asymptotically feasible} if there exists a bounded sequence $\{v^k\} \subset \mathcal{C}$, $k = 1,2,\dots,$ such that 
\[
\lim_{k \to \infty} G(v^k) = 0,
\]  
and call any limit point $\hat v := (\hat x,\hat z,\hat s,\hat \tau,\hat \kappa)$ of such a sequence an \emph{asymptotically feasible point}.   We will call any such point with ${\innerprod{\hat s}{\hat z} + \hat \tau\hat \kappa} = 0$ an \emph{(asymptotically) complementary} or optimal solution.  

We can now develop some basic results about solutions to the problem \ref{eqn:PD_homogeneous} in the spirit of \cite[Thm.\ 1]{Andersen1999}:

\begin{lem}
The problem \ref{eqn:PD_homogeneous} is (asymptotically) feasible and every (asymptotically) feasible point is (asymptotically) complementary.    
\end{lem}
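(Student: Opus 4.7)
The plan is to dispose of the two assertions separately. For (asymptotic) feasibility, the natural candidate is the origin itself. Taking $v^k := (0,0,0,1/k,0)$ gives a bounded sequence in $\mathcal{C}$ along which the nonlinear term $\frac{1}{\tau}x^\top P x$ vanishes (since $x^k = 0$), and a direct substitution into the definition of $G$ yields $G(v^k) = (-q/k,\,-b/k,\,0) \to 0$. Hence \ref{eqn:PD_homogeneous} is asymptotically feasible with the origin as a limit point.

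For the complementarity claim, the key step is an algebraic identity obtained by pairing each block of $G(v) = 0$ with its ``complementary'' variable. Writing $G = (G_1, G_2, G_3)$ for the three block components of the residual, I would verify by direct expansion that
\[
s^\top z + \tau\kappa \;=\; x^\top G_1(v) + z^\top G_2(v) + \tau\,G_3(v).
\]
The $-x^\top P x$ contributed by $x^\top G_1$ cancels the $x^\top P x = \tau \cdot \frac{1}{\tau}x^\top P x$ contributed by $\tau G_3$, while the cross terms involving $A$, $b$ and $q$ cancel pairwise by the (block-wise) skew-symmetry of the linear part of $G$. Critically, the nonlinear $\frac{1}{\tau}x^\top P x$ enters the identity only after multiplication by $\tau$, so the right-hand side extends continuously to $\tau = 0$ and is in fact a polynomial in $v$.

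The conclusion is then immediate. For an exactly feasible $v$ with $G(v) = 0$, the right-hand side of the identity vanishes and so $s^\top z + \tau\kappa = 0$, which is exactly the complementarity condition. For an asymptotically feasible limit point $\hat v$, take a bounded sequence $\{v^k\} \subset \mathcal{C}$ with $G(v^k)\to 0$ and (passing to a subsequence) $v^k \to \hat v$. Boundedness of $\{v^k\}$ together with $G(v^k) \to 0$ forces the right-hand side of the identity to tend to $0$, so $s^{k\top} z^k + \tau^k\kappa^k \to 0$, and by continuity $\hat s^\top \hat z + \hat\tau\hat\kappa = 0$.

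The main obstacle is simply careful bookkeeping in verifying the identity, in particular the treatment of the nonlinear $\frac{1}{\tau}x^\top P x$ term in $G_3$. The saving observation is that this problematic term only ever enters the identity after multiplication by $\tau$, which cancels the $1/\tau$ factor and leaves a continuous polynomial expression suitable for passing to limits along sequences in $\mathcal{C}$ with $\tau^k \downarrow 0$.
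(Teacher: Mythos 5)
Your proposal is correct and follows essentially the same route as the paper: an explicit asymptotically feasible sequence for the first claim, and the skew-symmetry identity $x\tpose G_1(v) + z\tpose G_2(v) + \tau G_3(v) = s\tpose z + \tau\kappa$ (in which the $\frac{1}{\tau}x\tpose Px$ term cancels after multiplication by $\tau$) combined with boundedness of the sequence for the second. The only cosmetic difference is your choice of witness sequence $v^k = (0,0,0,1/k,0)$, which is a slightly simpler construction than the paper's geometrically decaying sequence but serves the same purpose.
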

\begin{proof} 
We can construct an asymptotically feasible sequence by defining $x^k = (\frac{1}{2})^k\mathbf{1}\frac{1}{\sqrt{n}}$, $s^k = (\frac{1}{2})^ke_\mathcal{K}$, $z^k = (\frac{1}{2})^ke_{\mathcal{K}^*}$, $\tau^k = (\frac{1}{2})^k$ and $\kappa^k = (\frac{1}{2})^k$, where $(e_\mathcal{K},e_{\mathcal{K}^*})$ is any vector pair in  $\mathcal{K} \times \mathcal{K}^*$.   It is then straightforward to show that $G(v^k) \to 0$, noting in particular that the sequence
\[
  \frac{1}{\tau^k}(x^k)\tpose Px^k \le ({\tfrac{1}{2}})^k\bar{\sigma}(P) \to 0
\]
where $\bar\sigma({P})$ is the maximum singular value of $P$,
and is lower bounded by zero since $P$ is positive semidefinite.   To show the second part, observe that 
\[
 { 
\begin{bmatrix}
  x \\ z \\ \tau 
\end{bmatrix}}  \tpose G(v) = s\tpose z + \tau\kappa
\]
for any $v \in \mathcal{C}$, so $(s^k)^T(z^k) + \tau^k\kappa^k \to 0$ since $G(v^k) \to 0$ and all sequences are bounded.
\end{proof}

\begin{lem}
  Suppose that $v^* := (x^*,z^*,s^*,\tau^*,\kappa^*)$ is an asymptotically complementary solution to \ref{eqn:PD_homogeneous}.   Then:
  \begin{enumerate}[{i)}]
    \item If $\tau^* > 0$ then $(x^*/\tau^*,s^*/\tau^*)$ is an optimal solution to \ref{eqn:primal} and $(x^*/\tau^*,z^*/\tau^*)$ is an optimal solution to \ref{eqn:dual}.
    \item If $\kappa^* > 0$ then at least one of the following holds:
    \begin{itemize}
    \item \ref{eqn:primal} is infeasible and $z^* \in \mathbb{D}$.
    \item \ref{eqn:dual} is infeasible and $x^* \in \mathbb{P}$.
    \end{itemize}
  \end{enumerate}
\end{lem}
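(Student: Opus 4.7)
The plan is to first exploit nonnegativity and cone closedness to reduce asymptotic complementarity to two mutually exclusive cases on $\tau^*$ and $\kappa^*$, and then to analyze each separately.

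First, I would observe that since $\mcf{K}$, $\mcf{K}^*$ and $\Re_+$ are closed, the limit point $v^*$ of any bounded asymptotically feasible sequence itself lies in $\mcf{C}$, so $s^* \in \mcf{K}$, $z^* \in \mcf{K}^*$, and $\tau^*,\kappa^* \ge 0$. Asymptotic complementarity $\innerprod{s^*}{z^*} + \tau^*\kappa^* = 0$ with each term nonnegative forces $\innerprod{s^*}{z^*} = 0$ and $\tau^*\kappa^* = 0$, so the two cases in the statement are indeed mutually exclusive.

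For part (i), with $\tau^* > 0$ (hence $\kappa^* = 0$), I would divide each block of $G(v^*) = 0$ through by $\tau^*$ and introduce $\bar x = x^*/\tau^*$, $\bar s = s^*/\tau^*$, $\bar z = z^*/\tau^*$. The first two blocks yield $A\bar x + \bar s = b$, $\bar s \in \mcf{K}$ (primal feasibility) and $P\bar x + A\tpose\bar z = -q$, $\bar z \in \mcf{K}^*$ (dual feasibility). Dividing the third block by $\tau^*$ produces $\bar x\tpose P\bar x + q\tpose\bar x + b\tpose\bar z = 0$, which is precisely the zero-duality-gap identity derived for $\gamma$ earlier. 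Combined with feasibility this certifies optimality of $(\bar x,\bar s)$ for \ref{eqn:primal} and of $(\bar x,\bar z)$ for \ref{eqn:dual}.

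For part (ii), with $\kappa^* > 0$ (hence $\tau^* = 0$), I would work directly with an asymptotic sequence $v^k \to v^*$ such that $G(v^k) \to 0$. The first two blocks give, in the limit, $Px^* + A\tpose z^* = 0$ and $Ax^* + s^* = 0$, so $s^* = -Ax^* \in \mcf{K}$ and $z^* \in \mcf{K}^*$. The main obstacle here is handling the $\tfrac{1}{\tau^k}(x^k)\tpose Px^k$ term in the third block as $\tau^k \downarrow 0$. To sidestep it, I would premultiply the first block equation by $(x^k)\tpose$, premultiply the second by $(z^k)\tpose$, and subtract; the symmetric term $(z^k)\tpose A x^k$ cancels, and using $\innerprod{s^k}{z^k} \to 0$ together with $\tau^k \to 0$ the remaining mixed terms vanish in the limit, yielding $(x^k)\tpose Px^k \to 0$. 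Since $P \succeq 0$ this gives $Px^* = 0$, and plugging back into the first block forces $A\tpose z^* = 0$.

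Finally, to pin down the objective signs I would use $\tfrac{1}{\tau^k}(x^k)\tpose Px^k \ge 0$ together with the third block of $G(v^k) \to 0$, which gives $\kappa^k + q\tpose x^k + b\tpose z^k \le o(1)$. Passing to the limit yields $q\tpose x^* + b\tpose z^* \le -\kappa^* < 0$, so at least one of $q\tpose x^* < 0$ or $b\tpose z^* < 0$ must hold. Combined with the identities $Px^* = 0$, $A\tpose z^* = 0$, $s^* = -Ax^* \in \mcf{K}$ and $z^* \in \mcf{K}^*$ already established, $b\tpose z^* < 0$ places $z^* \in \mbb{D}$ and certifies infeasibility of \ref{eqn:primal}, while $q\tpose x^* < 0$ places $x^* \in \mbb{P}$ and certifies infeasibility of \ref{eqn:dual}.
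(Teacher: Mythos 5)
Your proof is correct and follows the same overall skeleton as the paper's: the reduction to the mutually exclusive cases via $\innerprod{s^*}{z^*}=\tau^*\kappa^*=0$, rescaling by $\tau^*$ for part (i), and a limit analysis along an asymptotically feasible sequence for part (ii). The one place you genuinely diverge is in how you eliminate the quadratic term in part (ii). The paper argues that $\tfrac{1}{\tau^k}(x^k)\tpose P x^k$ must remain bounded (since the remaining terms in the third block of $G$ are bounded and $G(v^k)\to 0$), deduces $(x^k)\tpose P x^k\to 0$, and then asserts the equality $q\tpose x^* + b\tpose z^* = -\kappa^*$, which strictly requires the stronger fact that $\tfrac{1}{\tau^k}(x^k)\tpose P x^k\to 0$ rather than merely staying bounded. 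Your route --- premultiplying the first two blocks by $(x^k)\tpose$ and $(z^k)\tpose$, cancelling the bilinear term, and invoking $\innerprod{s^k}{z^k}\to 0$ --- reaches $(x^k)\tpose P x^k\to 0$ without any boundedness claim on the $1/\tau^k$ term, and your weaker conclusion $q\tpose x^* + b\tpose z^* \le -\kappa^* < 0$ (using only nonnegativity of the quadratic term) is all the final step needs; this is a slightly more careful treatment of the same idea. One cosmetic remark: from the second block you correctly obtain $s^* = -Ax^*\in\mcf{K}$, whereas the paper's definition of $\mbb{P}$ asks for $Ax^*\in\mcf{K}$ and its proof writes $Ax^*-s^*=0$; this sign discrepancy lives in the paper itself, not in your argument, but it is worth flagging which convention your certificate satisfies.
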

\begin{proof}
For (i), note that $\kappa^* \to 0$ since the solution is assumed asymptotically complementary.  Then any point satisfying $G(v^*) = 0$ also satisfies the KKT conditions \eqref{eqn:KKT} following rescaling of $(x^*,s^*,z^*)$ by $\tau^*$.   

For (ii), suppose that $\{v^k\} \in \mathcal{C}, k = 1,2,\dots,$ is any bounded sequence with limit $v^*$.   Since $\tau^k \to 0$ by assumption, it follows that $(x^k)\tpose Px^k \to 0$ since $\frac{1}{\tau^k}x^kPx^k$ must remain bounded.   Hence $Px^* = 0$ since $P$ is assumed positive semidefinite, and both $Ax^* - s^* = 0$ and $A\tpose z^* = 0$.   Since $\{s^k,z^k\} \in \mathcal{K} \times \mathcal{K}^*$ by assumption and both cones are closed, $z^* \in \mathcal{K}^*$ and $Ax^* \in \mathcal{K}$. Since $q\tpose x^* + b\tpose z^* = -\kappa^*$ with $\kappa^* > 0$, then at least one of the inner product terms must be negative.   Consequently, if $\innerprod{q}{x^*} < 0$ then $x^* \in \mathbb{P}$ and \ref{eqn:dual} is infeasible, and if $\innerprod{b}{z^*} < 0$ then $z^* \in \mathbb{D}$ and \ref{eqn:primal} is infeasible. 
\end{proof}

\subsection{Supported constraint types and barrier functions}

Our interior-point method supports a variety of cones of both symmetric (i.e.\ self dual) and nonsymmetric type, and we allow the cone $\mathcal{K}$ in problem \ref{eqn:primal} to be any arbitrary composition of the basic cone types we describe in this section.    For the symmetric case, we support the following cones:
\begin{itemize}
	\item The \emph{nonnegative cone} $\coneNN{n} \eqdef
  \set{x \in \mathbb{R}^n}{x \ge 0}$.
	\item The \emph{second order cone} $\coneSOCP{n}  \eqdef 
  \set{(u,v) \in \mathbb{R} \times \mathbb{R}^{n-1}}{ \|v\|_2 \le u}$.
	\item The \emph{positive semidefinite cone} 
  $\coneSDP{n} \eqdef \set{x \in \mathbb{R}^{n(n+1)/2}}{\textrm{mat}(x) \succeq 0}$.
\end{itemize}
 
Our implementation for $\coneSDP{n}$ uses the standard scaled and vectorized `triangular' format.  Given some symmetric matrix $M \in \Re^{n\times n}$, we define an operator $\svec(\cdot)$ that extracts the upper triangular part and scales off-diagonal terms by $\sqrt{2}$, i.e.\  
\[
\svec(M) := 
{
(M_{1,1}, \sqrt{2}M_{1,2}, M_{2,2},\cdots,\sqrt{2}M_{n-2,n-1},M_{n-1,n-1},\sqrt{2}M_{1,n},\cdots,\sqrt{2}M_{n-1,n}, M_{nn})}.
\] 
The inverse operation $\mat(\cdot)$ then restores the original matrix, i.e.\ $M = \mat(\svec(M))$.   This scaling is necessary to preserve inner products, i.e.\ $\trace({L\tpose M}) = \innerprod{\svec(L)}{\svec(M)}$ for matrices of compatible dimension.    

We also support two nonsymmetric cones, both defined as subsets of $\mathbb{R}^3$: 
\begin{itemize}
	\item The \emph{exponential cone} $\coneEXP \eqdef 
  \set{(s_1,s_2,s_3) \in \mathbb{R}^3}{s_2 \cdot e^{s_1/s_2} \le s_3,s_2>0}$.
	\item The \emph{power cone} $\conePOW := \set{(s_1,s_2,s_3) \in \mathbb{R}^3}{s_1^{\alpha}s_2^{1-\alpha} \ge |s_3|,s_1 \ge 0,s_2 \ge 0}$.
\end{itemize}
	
For these two nonsymmetric cones the corresponding 
dual cones are
\begin{align*}
   \coneEXP^* &\eqdef \set{(z_1,z_2,z_3) \in \mathbb{R}^3}
   {z_3 \ge - z_1 \cdot e^{z_2/z_1-1}, \ z_3 > 0, z_1 < 0 }  \\
   \conePOW^* &\eqdef \set{(z_1,z_2,z_3) \in \mathbb{R}^3}{\left(\frac{z_1}{\alpha}\right)^{\alpha} \cdot \left(\frac{z_2}{1-\alpha}\right)^{1-\alpha} \ge |z_3|, \ z_1,z_2 \ge 0}.
\end{align*}

Finally, we support the zero cone $\mathcal{Z}_n := \{0\}^n$ to enable modelling of problems with equality constraints.   


\subsubsection*{Barrier functions}

For each nonzero cone we define a strictly convex \emph{barrier function} $f : \mathcal{K} \to \Re$ and its associated \emph{conjugate barrier} $f^* : \mathcal{K}^* \to \Re$.   In particular, we use barrier functions that meet the following definition:

\begin{definition}[\cite{Nesterov1994,Dahl21}]
A function $f : \mathcal{K} \to \Re$ is called a \emph{logarithmically homogeneous self-concordant barrier} with degree $\nu \ge 1$ ($\nu$--LHSCB) for the cone $\mathcal{K}$ if it satisfies the following properties:
\begin{align*}
	\begin{aligned}
		&|\nabla^3 f(x)[r,r,r]| \le 2\left(\nabla^2 f(x)[r,r]\right)^{3/2} & \forall x \in \interior \mathcal{K}, r \in \mathbb{R}^d, \\
		&f(\lambda x) = f(x) - \nu \log(\lambda) & \forall x \in \interior\mathcal{K}, \lambda > 0.
	\end{aligned}
\end{align*}
\end{definition}

The conjugate function $f^* : \mathcal{K}^* \to \Re$ is defined as
\begin{align}
	f^*(y) := \sup_{x \in \mathcal{K}} \{-\innerprod{y}{x}  - f(x)\}. \label{fundamental-conjugate-barrier-1}
\end{align}
The function $f^*$ is also $\nu$--LHSCB for $\mathcal{K}^*$ if $f$ is \cite{Nesterov97}.
The primal gradient $\gradient{f}$ satisfies
\begin{align}
	\innerprod{x}\,{\gradient{f}(x)} = - \nu, \forall x \in \interior\mathcal{K}. \label{fundamental-nu-equality}
\end{align}
In cases where the conjugate barrier function is known only through the definition \eqref{fundamental-conjugate-barrier-1} (i.e.\ rather than being representable in closed-form), we can compute its gradient as the solution to
\begin{align}
	\gradient{f^*}(y) := - \arg \sup_{x \in \interior\mathcal{K}} \{-\innerprod{y}{x} - f(x)\}. \label{fundamental-conjugate-gradient}
\end{align} 

The relations 
\eqref{fundamental-conjugate-barrier-1}--\eqref{fundamental-conjugate-gradient} then collectively ensure that
\begin{equation*} 
\begin{aligned}
	f^*(y) &= -\innerprod{y}{(-\gradient{f^*}(y))} - f(-\gradient{f^*}(y))  \\
  &= -\nu - f(-\gradient{f^*}(y)). 
\end{aligned}
\end{equation*}
and also
\begin{align*}
	-\gradient{f^*}(-\gradient{f}(x)) = x, \ \forall x \in \interior\mathcal{K}, \quad -\gradient{f}(-\gradient{f^*}(y)) = y, \ \forall y \in \interior\mathcal{K}^*. 
\end{align*}

\subsection{The central path}\label{subsection:central-path}

We assume that $f : \mathcal{K} \to \Re$ is a \vLHSCB function on $\mathcal{K}$ with conjugate barrier $f^*$ for some $\nu \ge 1$.  Given any initial $v^0 \in \mathcal{C}$, we define the \emph{central path} $v^*(\mu)$  as the unique solution to 
\begin{subequations}\label{eqn:central_path_compact}
\begin{gather}
  G(v) = \mu G(v^0)\label{eqn:central_path_compact:a} \\
  s = -\mu \nabla f^*(z), ~ z = -\mu \nabla f(s),
  \label{eqn:central_path_compact:b} 
\end{gather}
which implies that  
$$
\innerprod{s}{z} / \nu = \mu.
$$
 If the cone $\mathcal{K}$ is symmetric, then we can replace the condition \eqref{eqn:central_path_compact:b} with a simpler condition defined in terms of the \emph{Jordan product} \cite{Sturm:2002,cvxopt} on $\mathcal{K}$:
\begin{equation}\label{eqn:central_path_compact:c}
 s \circ z = \mu \mbf e,
\end{equation} 
\end{subequations}
where $\mathbf{e}$ is the standard idempotent for $\mathcal{K}$. The core of our interior-point algorithm amounts to a Newton-like method for computing a solution to the system of equations \eqref{eqn:central_path_compact}.   We describe the calculation of step directions for this method in \S\ref{sec:step_directions}.   Before doing so, we address solver initialization,  termination and preprocessing steps in the remainder of this section.


\subsection{Solver initialization}\label{sec:initialization_scaling}

Before solving we perform an equilibration step on all matrix-valued data using the Ruiz equilibration technique described in \cite{Ruiz:2001}.   We refer the reader to \cite[\S 3.5]{COSMO} and \cite[\S 5.1]{OSQP} for implementation details.    

We have several strategies for choosing an initial iterate  in the interior of our problem's conic constraints while being sufficiently near to the central path.   

\subsubsection{Symmetric cones}
In the symmetric case, i.e.\ when $\mcf K$ is self-dual, our method follows the approach of~\cite{cvxopt} for initialization and we distinguish between the cases $P = 0$ and $P\neq 0$.    Although the zero cone is not itself symmetric, we treat problems where $\mathcal{K}$ is a composition of symmetric and zero cones as symmetric for the purposes of initialization.

If $P \neq 0$, we first solve the linear system
\begin{align*}
	\left[\begin{array}{cc}
		P & A^\top\\
		A & - I
	\end{array}\right]\left[\begin{array}{c}
		x \\
		z
	\end{array}\right]=\left[\begin{array}{c}
		-q \\
		b 
	\end{array}\right],
\end{align*}
which is the solution to the problem
\begin{align*}
	\min & \quad \frac{1}{2}x^\top P x + q^\top x + \frac{1}{2}\|Ax - b\|_2^2.
\end{align*}
We set $x^0 = x$ and 
\begin{align*}
	s^0 = \left\{
	\begin{array}{ll}
		-z, & \alpha_p < -\epsilon \\
		-z + (\epsilon + \alpha_p) \mathbf{e} & \text{otherwise,}
	\end{array}
	\right.
\end{align*}
where $\alpha_p = \inf \{\alpha \ | \ -z + \alpha \mathbf{e} \in \cal K \}$. We introduce a threshold $\epsilon > 0$ to ensure that $s^0$ is in the strict interior of the cone $\mathcal K$. Likewise, $z^0$ is set to
\begin{align*}
	z^0 = \left\{
	\begin{array}{ll}
		z, & \alpha_d < -\epsilon \\
		z + (\epsilon + \alpha_d) \mathbf{e} & \text{otherwise,}
	\end{array}
	\right.
\end{align*}
where $\alpha_d = \inf \{\alpha \ | \ z + \alpha \mathbf{e} \in \cal K \}$. 

If $P = 0$, we instead solve the linear system
\begin{align*}
	\left[\begin{array}{cc}
		0 & A^\top\\
		A & - I
	\end{array}\right]\left[\begin{array}{c}
		x \\
		s
	\end{array}\right]=\left[\begin{array}{c}
		0 \\
		b 
	\end{array}\right],
\end{align*}
in an effort to minimize the linear residuals.
We set $x^0 = x$ and 
\begin{align*}
	s^0 = \left\{
	\begin{array}{ll}
		-s, & \alpha_p < -\epsilon \\
		-s + (\epsilon + \alpha_p) \mathbf{e} & \text{otherwise},
	\end{array}
	\right.
\end{align*}
where $\alpha_p = \inf \{\alpha \ | \ -s + \alpha \mathbf{e} \in \cal K \}$. We then solve the system
\begin{align*}
	\left[\begin{array}{cc}
		0 & A^\top\\
		A & - I
	\end{array}\right]\left[\begin{array}{c}
		x \\
		z
	\end{array}\right]=\left[\begin{array}{c}
		-q \\
		0 
	\end{array}\right],
\end{align*}
which is equivalent to
\begin{align*}
\begin{aligned}
		\min_z & \quad \frac{1}{2}\|z\|_2^2 \\
	s.t. & \ A^\top z + q = 0.
\end{aligned}
\end{align*}
Finally, we set
\begin{align*}
	z^0 = \left\{
	\begin{array}{ll}
		z, & \alpha_d < -\epsilon \\
		z + (\epsilon + \alpha_d) \mathbf{e} & \text{otherwise,}
	\end{array}
	\right.
\end{align*}
where $\alpha_d = \inf \{\alpha \ | \ z + \alpha \mathbf{e} \in \cal K \}$.    In either case, we set the scalars $(\tau^0, \kappa^0)$ to $1$.

\subsubsection{Nonsymmetric cones}
When $\mcf K$ contains any nonsymmetric cone, we instead apply a unit initialization strategy as in~\cite{Skajaa15,Dahl21}. In this case, we initialize both primal and dual variables at a point on the central path satisfying \mbox{$z = s = - \nabla f^*(z)$} (corresponding to $\mu^0 = 1$).   This is equivalent to solving the unconstrained optimization
$$
\min_{z} \ \frac{1}{2}z^2 + f^*(z),
$$
which is strictly convex and has a unique solution.
It yields $s_{sym}^0 = z_{sym}^0= \mathbf{e}$ for symmetric cones,
\begin{align*}
  s_{\text{exp}}^0 &= z_{\text{exp}}^0 \approx (-1.051383,0.556409,1.258967)
\intertext{
for exponential cones, and
}
	s_{\text{pow}}^0 &=  z_{\text{pow}}^0= (\sqrt{1+\alpha}, \sqrt{2-\alpha},0)
\end{align*}
for power cones of parameter $\alpha$.

\subsection{Chordal Decomposition}\label{sec:chordal_decomposition}

For semidefinite programs (SDPs) we follow the chordal-decomposition and clique merging strategy developed as part of the COSMO solver \cite{COSMO}.   We implement both a `parent-child' merge strategy based on the clique-tree analysis method of \cite{Sun:2014} and the clique-graph merge strategy described in \cite[\S 5]{COSMO}.  We reformulate problems decomposed using either of these methods using the so-called `compact' or `range-space' conversion method described in \cite[\S 5]{Kim:2011}.

When implementing the clique-graph based merge strategy, we use the same edge-weight metric as in~\cite{COSMO} to determine when candidate clique merges are accepted.   Given a clique graph with $V$ vertices and two cliques $\mathcal{C}_i$ and $\mathcal{C}_j$, we define an edge weight function $e : 2^V \times 2^V \to \Re$ as
\[
  e(\mathcal{C}_i,\mathcal{C}_j) = |\mathcal{C}_i|^3  + |\mathcal{C}_i|^3 - |\mathcal{C}_i \cup \mathcal{C}_j|^3,
\]
and merge candidate cliques when this metric is positive.  We note that this metric was used in the ADMM-based solver \cite[\S 5]{COSMO} because it relates directly to the computational complexity of projections onto the positive semidefinite cone.   In our case the size of a Hessian block generated by a conic variable in $\coneSDP{n}$ has dimension $\Re^{(n(n+1)/2) \times (n(n+1)/2)}$, with the resulting KKT factorization time further complicated by the linking variables generated between overlapping cliques.   We note therefore that it is likely that more efficient merge metrics are possible.

\subsection{Termination Criteria}

All termination criteria are based on {unscaled} problem data and iterates, i.e.\ after the Ruiz scaling described in \S\ref{sec:initialization_scaling} has been reverted.    For checks of primal and dual feasibility, we first define normalized variables $\bar{x} = x/\tau, \bar{s} = s/\tau, \bar{z} = z/\tau$ and then define primal and dual residuals as 
\begin{align*}
r_p &\eqdef A\bar x - \bar s + b \\
r_d &\eqdef P\bar x + A^\top \bar z + q 
\intertext{and primal and dual objectives as}
  g_p &\eqdef \frac{1}{2} \bar x^\top P \bar x + q \tpose x  \\
  g_d &\eqdef -\frac{1}{2} \bar x^\top P \bar x - b\tpose \bar z .
\end{align*}

We then declare convergence if each of the following three conditions holds:
\begin{align*}
  \norm{r_p} &< \epsilon_f \cdot \max\{1, \norm{b}_\infty + \norm{\bar x} + \norm{\bar s}\} \\
  \norm{r_d} &< \epsilon_f \cdot \max\{1, \norm{q}_\infty + \norm{\bar x} + \norm{\bar z}\} \\
  |g_p - g_d| &< \epsilon_f\cdot  \max\{1, \min\{|g_p|, |g_d|\}\}.
\end{align*}

We specify a default value of $\epsilon_f = 10^{-8}$ in our implementation, and a weaker threshold of $\epsilon = 10^{-5}$ when testing for `near optimality' in cases of early termination (e.g.\ lack of progress, timeout, iterations limit).

When testing for infeasibility certificates, we do \emph{not} normalize iterates, but rather work directly with the unscaled variables since infeasibility corresponds to the case where $\tau \to 0$.  We declare primal infeasibility if 
\begin{align*}
\norm{A\tpose z} & < -\epsilon_{i,r} \cdot \max(1, \norm{x} + \norm{z}) \cdot (b\tpose z) \\
b\tpose z & < -\epsilon_{i,a},
\end{align*}
and dual infeasibility if 
\begin{align*}
  \norm{Px} & < -\epsilon_{i,r} \cdot \max(1, \norm{x}) \cdot (b\tpose z) \\
  \norm{Ax + s} & < -\epsilon_{i,r} \cdot \max(1, \norm{x} + \norm{s}) \cdot (q\tpose x) \\
  q\tpose x  & < -\epsilon_{i,a}.
  \end{align*}
We again set $\epsilon_{i,r} = \epsilon_{i,a} = 10^{-8}$ as default values, and allow for weaker thresholds to declare `near infeasibility' certificates in cases of early termination.

\section{Computing step directions}\label{sec:step_directions}

Our interior-point method computes Newton-like search directions using a linearization of \eqref{eqn:central_path_compact} given some right-hand side residual $d := (d_x, d_z,d_\tau,d_s,d_\kappa)$.  This produces a linear system in the form

\begin{subequations}\label{eqn:linsys_4x4}
\begin{gather}%
\begin{bmatrix}
  0 \\ \Delta s \\ \Delta \kappa
\end{bmatrix}  
-
\begin{bmatrix}
  P & A\tpose & q \\
  -A  & 0 & b \\
  -(q + 2P\xi)\tpose & -b\tpose & \xi\tpose P \xi
\end{bmatrix}
\begin{bmatrix}
  \Delta x \\ \Delta z \\ \Delta \tau
\end{bmatrix}  
= 
-
\begin{bmatrix}
  d_x\\d_z \\ d_\tau
\end{bmatrix}  \label{eqn:linsys_4x4:a} \\[1ex] 
H \Delta z + \Delta s = -d_s, \quad \kappa \Delta \tau + \tau\Delta \kappa = - d_\kappa,
\label{eqn:linsys_4x4:b}
\end{gather}
\end{subequations}
where $\xi := x\tau^{-1}$ and $H \in \Re^{m\times m}$ is a positive definite matrix that we will refer to as the \emph{scaling matrix}.   The choice of both the scaling matrix and the right-hand side terms $d$ in \eqref{eqn:linsys_4x4}  depend on the search direction, the symmetry or asymmetry of the cone $\mcf{K}$ and the particular choice of scaling strategy.  We defer more precise definitions of these terms to later in this section.

Our approach to solving~\eqref{eqn:linsys_4x4} follows that of \cite{cvxopt,ecos}, but differs in the fact that some blocks in the coefficient matrix of \eqref{eqn:linsys_4x4:a} include an additional term when $P \neq 0$.   We first eliminate the variables $(\Delta s, \Delta \kappa)$ to obtain the reduced system 

\begin{subequations}\label{eqn:linsys_3x3}
	\begin{gather}
		\begin{bmatrix}
			P & A^\top & q \\
			-A & H & b \\
			-(q + {2P \xi})^\top & -b^\top & {\xi^\top P \xi} + \frac{\kappa}{\tau}
		\end{bmatrix}
    \begin{bmatrix}
			\Delta x \\
			\Delta z \\
			\Delta \tau
		\end{bmatrix} = 
    \begin{bmatrix}
			d_x \\
			d_z - d_s  \\
			d_\tau - d_\kappa / \tau
  \end{bmatrix} \\[1ex]
		\Delta s = -d_s - H \Delta z, \quad \Delta \kappa=-(d_\kappa+\kappa \Delta \tau) /\tau. \label{eqn:linsys_3x3:b}
	\end{gather}
\end{subequations}

To solve \eqref{eqn:linsys_3x3} we solve a pair of linear systems with a common left-hand side
	\begin{equation}\label{eqn:linsys_2x2}
		\begin{bmatrix}
			P & A^\top\\
			A & - H
		\end{bmatrix}
    \left[
      \begin{array}{c|c}
        \Delta x_1 & \Delta x_2\\
        \Delta z_1 & \Delta z_2
    \end{array}
    \right]
    =
    \left[
    \begin{array}{c|c}
			d_x  & -q \\
			-(d_z - d_s) & b
    \end{array}
    \right]
  \end{equation}
and then recover $(\Delta \tau, \Delta x, \Delta z)$ using 
\begin{align*}
	\Delta \tau &=\frac{d_\tau-d_\kappa /\tau+(2P\xi + q)^\top \Delta x_1+b^\top \Delta z_1}{\kappa/\tau + \xi^\top P \xi-(2P\xi + q)^\top \Delta x_2-b^\top \Delta z_2} \notag \\[1ex]
	&=\frac{d_\tau-d_\kappa /\tau + q^\top \Delta x_1+b^\top \Delta z_1 + 2\xi^\top P \Delta x_1}{\|\Delta x_2 - \xi\|_P^2 -\|\Delta x_2\|_P^2 -q^\top \Delta x_2-b^\top \Delta z_2},
\end{align*}
and
\begin{align*}
	\Delta x=\Delta x_1+\Delta \tau \cdot \Delta x_2, \quad \Delta z=\Delta z_1+\Delta \tau \cdot \Delta  z_2.
\end{align*}
Finally, we recover $(\Delta s, \Delta \kappa)$ from~\eqref{eqn:linsys_3x3:b}.
After obtaining the search direction $(\Delta s, \Delta z,\Delta \tau, \Delta \kappa)$, we  compute the maximal step size $\alpha$ ensuring that the new update resides in the interior of conic constraints. In cases where any part of $\mathcal{K}$ is nonsymmetric, we also choose $\alpha$ sufficiently small so that the updated values $(s+\alpha\Delta s, z + \alpha\Delta z)$ remain within a neighborhood of the central path~\eqref{eqn:central_path_compact} using the proximity metric described in~\cite{Dahl21}.

\subsection{Scaling matrices}
The choice of scaling matrix $H$ in~\eqref{eqn:linsys_4x4} depends on the way in which we linearize the central path equations as defined in Section~\ref{subsection:central-path}. For symmetric cones the most common choice is the NT scaling~\cite{Nesterov97,Nesterov98}, which we also employ. 
For nonsymmetric cones, the central path is defined by the set of point satisfying~\eqref{eqn:central_path_compact:b}, and both symmetric scaling~\cite{Dahl21} or the nonsymmetric scaling strategies~\cite{Serrano15} have been implemented in  state-of-the-art solvers.   We set $H = 0$ for the zero cone.

\subsubsection{Symmetric cones}
For symmetric cones we linearize the central path equation \eqref{eqn:central_path_compact:c}.
The NT scaling method exploits the self-scaled property of symmetric cone $\mathcal{K}$ to define, for $(s,z) \in \mathcal{K}$, a unique scaling point $w \in \mathcal{K}$ satisfying
\begin{equation*}
	H(w)s = z.
\end{equation*}
The matrix $H(w)$ can be factorized as $H^{-1}(w) = W^\top W$, and we set $H = H^{-1}(w)$ in~\eqref{eqn:linsys_4x4}. The factors $w,W$ are then computed following~\cite{cvxopt} except in the case of second-order cones, where we instead apply a modified version of the sparse factorization strategy of \cite{ecos} for second-order cones of dimension greater than $4$.
 
\subsubsection{Nonsymmetric cones}
In the nonsymmetric case the self-scaled property does not hold and the central path can't be formulated as in~\eqref{eqn:central_path_compact:c} due to the lack of a Jordan algebra, so we instead linearize \eqref{eqn:central_path_compact:b}. A general primal-dual scaling strategy suitable for nonsymmetric cones was consequently proposed in~\cite{Tuncel01}, and used later in~\cite{Dahl21}, 
%
%
which relies on the satisfaction of two secant equations 
\begin{align*}
	Hz = s, \quad H \nabla f(s) = \nabla f^*(z).
\end{align*}
Suppose we define \emph{shadow iterates} as
\begin{align*}
	\tilde{z} := -\nabla f(s), \quad \tilde{s}:= - \nabla f^*(z),
\end{align*}
with
\begin{align*}
	\tilde{\mu} = \langle \tilde{s}, \tilde{z} \rangle/\nu.
\end{align*}
A scaling matrix $ H $ can then be obtained from the rank-4 Broyden-Fletcher-Goldfarb-Shanno (BFGS) update, which is commonly used in quasi-Newton methods,
\begin{align*}
	H \eqdef H_{\mathrm{BFGS}} := Z(Z^\top S)^{-1}Z^\top + H_a - H_aS(S^\top H_aS)^{-1}S^\top H_a,
\end{align*}
where $Z \eqdef [z, \tilde{z}], S \eqdef [s, \tilde{s}], \tilde{z} \eqdef -\nabla f(s), \tilde{s} \eqdef - \nabla f^*(z)$ and $H_a \succ 0$ is an approximation of the Hessian. In our implementation, we choose $H_a = \mu \nabla^2 f^*(z)$ following~\cite{Dahl21} and the calculation of $H_{\mathrm{BFGS}}$ reduces to a rank-3 update,
\begin{align*}
	\begin{aligned}
		H_{\mathrm{BFGS}}=& \mu \nabla^2 f^*(z)+\frac{1}{2 \mu \nu} \delta_s\left(s+\mu \tilde{s}+\frac{1}{\mu \tilde{\mu}-1} \delta_s\right)^T+\frac{1}{2 \mu \nu}\left(s+\mu \tilde{s}+\frac{1}{\mu \tilde{\mu}-1} \delta_s\right) \delta_s^T \\
		&-\mu \frac{\left(\nabla^2 f^*(z) \tilde{z}-\tilde{\mu} \tilde{s}\right)\left(\nabla^2 f^*(z) \tilde{z}-\tilde{\mu} \tilde{s}\right)^T}{\left\langle\tilde{z}, \nabla^2 f^*(z) \tilde{z}\right\rangle-\nu \tilde{\mu}^2} . 
	\end{aligned}
\end{align*}
More details about the  primal-dual scaling of nonsymmetric cones can be found in~\cite{Dahl21}.

\subsection{The affine and centering directions}


At every interior-point iteration, our method requires us to solve the linear system~\eqref{eqn:linsys_4x4} for each of two different right-hand side terms $(d_x,d_z,d_{\tau}, d_s, d_{\kappa})$.   This amounts to a single factorization of the condensed system~\eqref{eqn:linsys_2x2} followed by three linear solves, noting the common right-hand-side term $(-q,b)$ in~\eqref{eqn:linsys_2x2}. 

The first of these two choices produces the \textit{affine step} direction, where we compute the search direction $(\Delta x, \Delta s, \Delta z, \Delta \kappa, \Delta \tau)$ to eliminate the linear residuals in \eqref{eqn:root_finding_G}, i.e.\ by computing a pure Newton step direction for \eqref{eqn:central_path_compact} with $\mu = 0$. The second is called the \textit{combined step}, which is the affine step plus a centering correction toward the central path. In practice, the estimated direction $(\Delta x, \Delta s, \Delta z, \Delta \kappa, \Delta \tau)$ from the affine step is used to determine the right-hand side $(d_x,d_z,d_{\tau}, d_s, d_{\kappa})$ for the combined step to compute a higher-order correction for acceleration. In the language of predictor-corrector algorithms, the affine step is the \emph{predictor} while the centering step as the \emph{corrector}.

Computation of a higher-order correction term $\eta$ is a heuristic technique that is known to accelerate the convergence of IPMs significantly.  The choice of this term varies depending on the choice of the scaling matrix $H$ and whether a given cone constraint is symmetric. 
For our method $(d_x,d_z,d_{\tau}, d_s, d_{\kappa})$ is defined as 
\begin{align*}
	d_x = r_x, d_z = r_z, d_{\tau} = r_{\tau}, 	d_{\kappa} = \kappa \tau, d_s = s, 
\end{align*}
in the affine step, and 
\begin{align*}
	\begin{aligned}
		d_x = (1-\sigma)r_x, d_z = (1-\sigma)r_z, d_{\tau} = (1-\sigma)r_{\tau}, d_{\kappa} = \kappa \tau + \Delta \kappa \Delta \tau - \sigma \mu, \\
		d_s = \left\{
		\begin{array}{cl}
			W^\top \left(\lambda \backslash \left(\lambda \circ \lambda + \eta - \sigma \mu \mathbf{e}\right) \right) & \text{(symmetric)}\\
			s + \sigma \mu \nabla f^*(z) + \eta & \text{(nonsymmetric),}
		\end{array}
		\right.
	\end{aligned}
\end{align*}
in the combined step.  For symmetric cones we use the Mehrotra correction $\eta = (W^{-1} \Delta s) \circ (W \Delta z)$ \cite{cvxopt}, while for nonsymmetric cones we compute $\eta$ using the 3rd-order correction method from~\cite{Dahl21}, i.e.
\begin{equation*}
	\eta = -\frac{1}{2} \nabla^3 f^*(z)[\Delta z,\nabla^2 f^*(z)^{-1} \Delta s].
\end{equation*}
We choose 
$
	\sigma = (1-\alpha_{\textrm{aff}})^3,
$
where $\alpha_{\textrm{aff}}$ is the step size from the affine step.

\subsection{Linear solve method} \label{sec:linearsolvemethods}

Nearly all of the cost of computing step directions in \S\ref{sec:step_directions} typically comes when solving the symmetric linear system \eqref{eqn:linsys_2x2}.   To solve this system we add a small regularization term $\epsilon_{s}$ and then compute a direct factorization
\[
K :=
\begin{bmatrix}
  P + \epsilon_sI & A^\top\\
  A & - (H+\epsilon_sI)
\end{bmatrix} = LDL\tpose.
\]
This \emph{static regularization} ensures that the matrix $K$ is quasidefinite \cite{Vanderbei1995} even if $P$ is rank deficient, and consequently that the factor $D$ is diagonal with $D_{ii} \neq 0$ when computing the Cholesky-like factorization $LDL^T$~\cite{Gill1996}.   This approach also ensures that the sparsity pattern of the factor $L$ depends only on the problem's sparsity pattern, so the method is allocation-free after the first factorization.  When computing the $LDL$ factorization we also employ a \emph{dynamic regularization} strategy by bounding pivots away from zero by an amount $\epsilon_d$ to ensure that the factorization is numerically stable.   We have extended the open-source package QDLDL, original developed for OSQP and based on \cite{Davis2005}, to support this regularization strategy.   

  The improved numerical efficiency of our scheme relative to the standard HSDE method can be seen by consideration of the linear system in \eqref{eqn:linsys_2x2}.   If we had started instead from the epigraphical reformulation~\eqref{eqn:primal_linear}, then the coefficient matrix in \eqref{eqn:linsys_2x2} would have been 
  \begin{equation}\label{eqn:hsde_bad_matrix_form}
    \begin{bmatrix}
			0 & A^{T} & [2P^{\frac{1}{2}}]^T\,\\
			A & -H^{\hphantom{T}} \\
      2P^{\frac{1}{2}} & & -H_\mathcal{K}
		\end{bmatrix}
  \end{equation}
  with $H_\mathcal{K}$ a scaling matrix appropriate for the additional second order cone constraint introduced in~\eqref{eqn:primal_linear}.   Direct factorization of \eqref{eqn:hsde_bad_matrix_form} can produce significantly more fill-in than for coefficient matrix \eqref{eqn:linsys_2x2}, particular when the matrix factor $P^\frac{1}{2}$ already has substantial fill-in.

\section{Implementation: the Clarabel solver}

We have implemented our approach in the \textsc{Clarabel} solver, an open-source and liberally licensed software package with separate implementations in both the Rust \cite{rust} and Julia \cite{julia} programming languages.   Both implementations, along with extensive documentation, are available under the Apache v2.0 license and are publicly available via our main project site at 

\begin{center}
\color{blue}\url{https://clarabel.org/}
\end{center}

Our Rust implementation is intended for most academic and industrial end users.  This implementation can also be accessed via other common languages through standard foreign function interfaces (FFIs) and wrappers, and we currently provide such wrappers for Python, C, \Cpp and R.   We also provide interfaces in Python to the standard modelling package CVXPY \cite{diamond2016cvxpy,agrawal2018rewriting}.   Clarabel is installed as part of the standard CVXPY distribution \cite{cvxpy:web} and is the default solver in CVXPY for linear and second-order cone programs as of version 1.5.

The Rust version of Clarabel provides its own internal implementation of most linear algebra functionality, including a standalone Rust reimplementation of the quasidefinite linear solver QDLDL with regularization features as described in \S\ref{sec:linearsolvemethods}.   We use Rust interfaces to user-selectable BLAS~\cite{blackford2002updated} implementations (e.g. \cite{intel_mkl,openblas}) for solving conic programs on the semidefinite cone.   We also allow for the use of alternative direct linear solvers, and provide optional support for a 3rd-party multithreaded supernodal LDL factorization method implemented in native Rust as part of the \texttt{faer-rs} package \cite{faer}.

The Julia implementation is intended for use both as a standalone solver for users of the Julia language and as a prototyping platform for future algorithmic development.  The Julia implementation relies heavily on native Julia functions for most linear algebra functionality, aside from a Julia implementation of QDLDL which we provide as a standalone package.  In Julia, we also provide the option of using alternative linear solve methods including CHOLMOD \cite{cholmod}, Pardiso \cite{pardiso} and the HSL MA57 \cite{ma57} solver. 

Both implementations provide the same functionality and support the same set of conic constraints.   We also provide support for different floating point data types in both languages, e.g.\ for standard 32- or 64-bit single or double precision floating point types \cite{kahan1996ieee} or for extended precision types such as the Julia \textsc{BigFloat} type.   Our implementation is inspired by the modular design pattern of the interior-point solver OOQP~\cite{ooqp}, in the sense that all internal data types are defined as abstract types that can be extended or customized by end users to specific problem classes to exploit domain-specific structure.  In the Rust implementation this functionality relies heavily on Rust's trait-based type system and generics, while in Julia we instead rely on Julia's dynamic dispatch and ``duck typing'' \cite{ducktyping}. 

For more detail we refer the reader to the documentation available on the \textsc{Clarabel} project website.

\section{Numerical Experiments}

We have benchmarked our implementation of Clarabel against a variety of open-source and commercial solvers: the open-source interior-point solver ECOS \cite{ecos}, the open-source dual-simplex base solver HiGHS \cite{HiGHS}, and the commercial interior-point solvers Mosek~\cite{MOSEK} and Gurobi~\cite{gurobi}.   All benchmarks are executed with all default settings for all solvers enabled, but with pre-solve disabled where applicable to ensure that the solvers are solving equivalent problems.   We do not impose any iteration limits other than those specified within each solver's internal defaults.  We set the maximum solve time to 300 seconds unless otherwise stated.

Our two implementations of Clarabel use our native implementations of the direct LDL linear solver QDLDL for all tests unless otherwise stated.   The  QDLDL solver is relatively unsophisticated relative to the multithreaded methods used in commercial solvers, but is lightweight, open-source and does not rely on any external libraries. 

All experiments were carried out on the Oxford University Advanced Research Computing (ARC) Facility~\cite{OxfordArc}.  For each test problem in our benchmarks results, solver were run single threaded on an Intel Xeon Platinum 8268 CPU @ 2.90 GHz with 64 GB RAM.  All benchmarks tests are scripted in Julia and access solver interfaces via JuMP \cite{Lubin2023}.  We use Rust compiler version 1.72.0 and Julia version 1.9.2.   The code for all numerical examples is publicly available~\cite{ClarabelBenchmarks}.

For each set of benchmark problems in our results we exclude problems for which \emph{none} of the benchmarked solvers produced a valid solution.   We provide a summary of the results for all benchmarked solvers appropriate to each problem class in the form of shifted geometric means and performance profiles in the remainder of this section.

For all benchmark tests sets, we provide more detailed numerical results for our Rust and Julia implementations as well as the solvers Mosek and ECOS (where applicable) in Appendix~\ref{appendix:full_results}, including solve times and iteration counts.   We include only this subset of solvers in our detailed reporting since all are interior-point based methods and therefore have iteration counts that are directly comparable.  

\clearpage

\paragraph{Shifted geometric means}

We follow the standard benchmarking convention of using a normalized shifted geometric mean for comparison of solve time across different solvers \cite{hansbench}.   For a set of $N$ test problems, we define the shifted geometric mean solve time $g_s$ for solver $s$ as 
\[
g_s \eqdef \left[\prod_{p=1}^N  (t_{p,s} + k) \right]^{\frac{1}{N}} - k,
\]

where $t_{p,s}$ is the time in seconds for solver $s$ to solve problem $p$, and $k = 1$ is the shift.  The normalized shifted geometric mean is then defined as 
\[
  r_s \eqdef \frac{g_s}{\min_{s} g_{s}},
\]
so that the solver with the lowest shifted geometric mean solve time has a normalized score of~1.  For those problems for which a given solver fails, we assign a solve time $t_{p,s}$ equal to the maximum allowable solve time for the relevant benchmark.

\paragraph{Performance profiles}

We also provide performance profiles \cite{Dolan2002} to compare both the relative and absolute performance of different solvers.   For a set of $N$ test problems, we define the \emph{ relative performance ratio} for solver $s$ and problem $p$ as
\[
  u_{p,s} = \frac{t_{p,s}}{\min_{s} t_{p,s}}.
\]
The performance profile for the solver $s$ is then a plot of the function $f^r_s : \Re_+ \mapsto [0,1]$ defined as 
\[
f^r_s(\tau) \eqdef \frac{1}{N} \sum_p \mathcal{I}_{\le \tau}(u_{p,s}),
\]
where $\mathcal{I}_{\le \tau} = 1$ if $\tau \le t_{p,s}$ and $\mathcal{I}_{\le \tau} = 0$ otherwise.   The relative performance profile therefore shows, at each level $\tau$, the fraction of problems solved by solver $s$ in time within a factor $\tau$ of the solve time of the best solver. 

Since the relative performance profile for a given solver can change depending on the overall collection of solvers being benchmarked, we further compute an \emph{absolute performance profile} by plotting a function $f^a_s : \Re_+ \mapsto [0,1]$ as 
\[
  f^a_s(\tau) \eqdef \frac{1}{N} \sum_p \mathcal{I}_{\le \tau}(t_{p,s}).
\]
The absolute performance profile then shows, at each level $\tau$, the fraction of problems solved by solver $s$ within $\tau$ seconds and is independent of the other solvers benchmarked.

\subsection{Benchmark problems with quadratic objectives}

In this section we present benchmark results for quadratic programming (QP) problems in the standard form \ref{eqn:primal} with the set $\mathcal{K}$ restricted to the composition of the zero cone (i.e.\ modelling linear equality constraints) and the nonnegative orthant.  We consider example problems taken or generated from standard open-source problem collections and covering a wide range of problem dimensions. 

\subsubsection{The Maros-Meszaros test set}

We consider first the standard benchmark collection of 138 quadratic programs from the Maros-Meszaros test set \cite{maros1999}.   This collection of QPs includes a wide range of problem sizes and contains a number of difficult test cases due to numerical ill-conditioning, rank deficiency or poor scaling. 

\textbf{Results} Results for this benchmark set are shown in Figure~\ref{fig:maros} for all solvers.   Clarabel is the fastest overall solver on this benchmark set, with the Rust implementation marginally faster as expected.   We observe that the seemingly large gap in the relative performance profile of our Rust and Julia implementations is almost entirely attributable to faster solve times among the smallest examples in this test set.   All solvers fail on at least some subset of these benchmarks, with Gurobi the lowest failure rate at full accuracy, and Clarabel the lowest failure rate at reduced accuracy.

Of particular note in this benchmark set is the high failure rate of the ECOS and Mosek solvers, since both are interior-point methods broadly similar to Clarabel.   In the case of ECOS these failures are partly attributable to the solvers' requirement to reformulate QP problems in the conic form \eqref{eqn:hsde_bad_matrix_form}, since ECOS does not support quadratic objectives natively.   This leads to immediate failures in a substantial number of cases due to ill-conditioning of the matrix $P$, resulting in a failed attempt to compute the Cholesky factor $P^{\frac{1}{2}}$ in \eqref{eqn:hsde_bad_matrix_form} when $P$ is either semidefinite or contains very small negative eigenvalues.  Mosek handles this case more robustly, but is still not able to solve a substantial number of problems to full accuracy within the benchmark time limit.   

\captionsetup{labelfont=bf}
\begin{figure}[ht]
  \centering
  \caption{\bf Performance profiles for the Maros-Meszaros problem set}
  \label{fig:maros}
  \begin{subfigure}[b]{0.49\textwidth}
      \centering
      {\includegraphics[width=\textwidth]{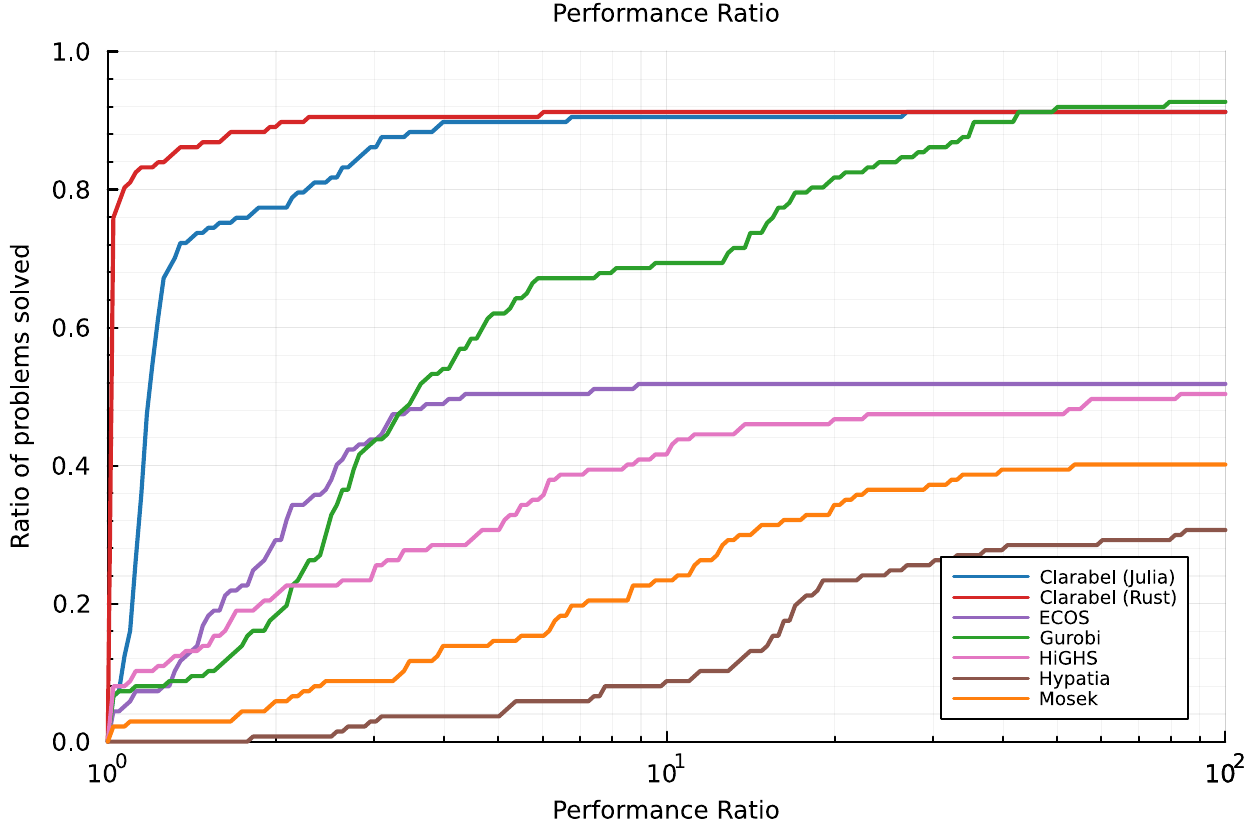}}
      \caption{Relative performance profile}
      \label{fig:maros:relative}
  \end{subfigure}
  \hfill
  \begin{subfigure}[b]{0.49\textwidth}
      \centering
      {\includegraphics[width=\textwidth]{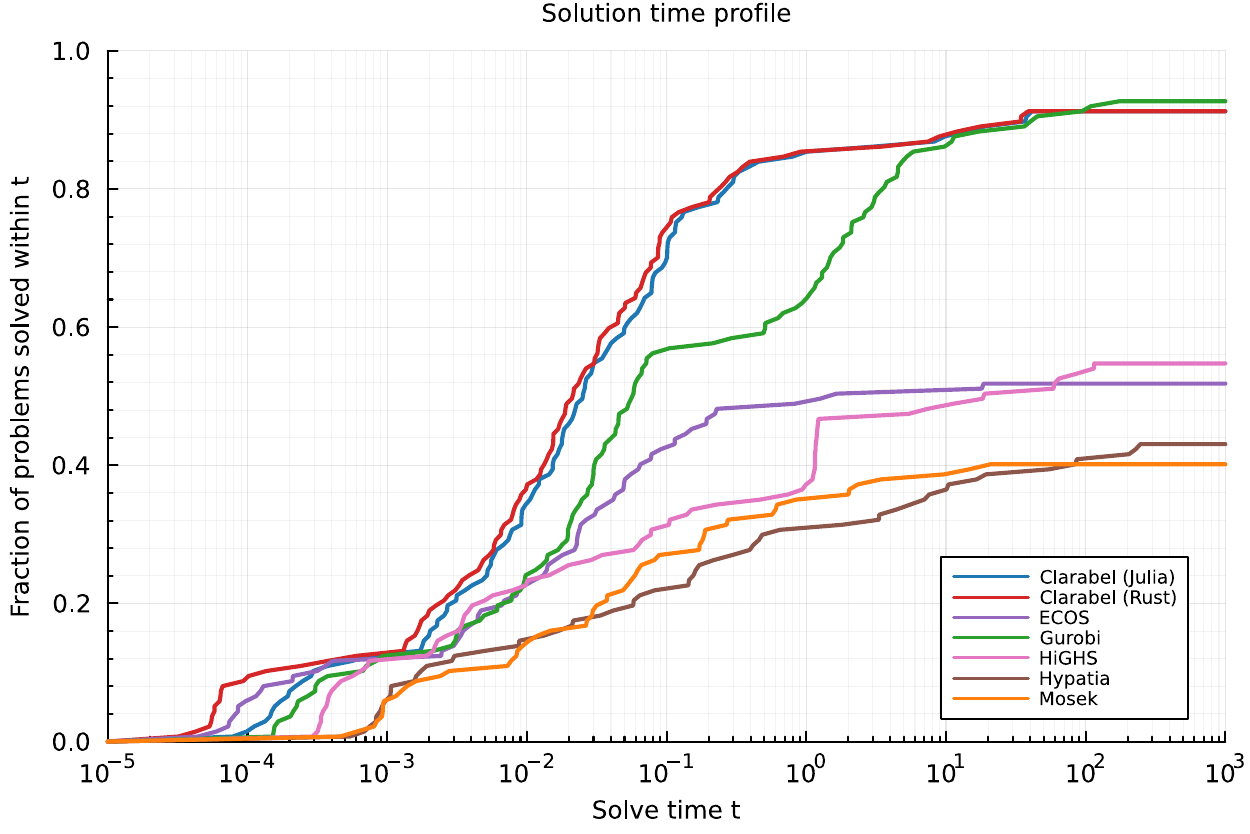}}
      \caption{Absolute performance profile}
      \label{fig:maros:absolute}
  \end{subfigure}
  \begin{subfigure}{1\textwidth}
    \centering
    \footnotesize
    \begin{tabular}{llccccccc}
  \hline
   &  & \textbf{ClarabelRs} & \textbf{Clarabel} & \textbf{ECOS} & \textbf{Gurobi} & \textbf{HiGHS} & \textbf{Hypatia} & \textbf{Mosek} \\\hline
  Shifted GM & Full Acc. & 1.0 & 1.02 & 15.49 & 1.64 & 17.92 & 36.61 & 32.67 \\
   & Low Acc. & 1.0 & 1.1 & 19.8 & 2.9 & 39.99 & 42.99 & 4.07 \\
  Failure Rate (\%) & Full Acc. & 8.8 & 8.8 & 48.2 & 7.3 & 45.3 & 56.9 & 59.9 \\
   & Low Acc. & 2.2 & 2.9 & 38.0 & 3.6 & 45.3 & 42.3 & 10.2 \\\hline
\end{tabular}

    \caption{Benchmark timings as shifted geometric mean and failure rates}
  \end{subfigure}
\end{figure}


\subsubsection{Least-squares problems with SuiteSparse matrices}

We next consider a collection of 23 sparse least-squares problems $Ax\approx b$ derived from matrices taken from the SuiteSparse Matrix Collection \cite{SuiteSparseMatrices}, following the equivalent set of benchmark examples from \cite{OSQP}.  For each case we compute an approximate solution by solving a constrained QP using two standard methods:

\paragraph{Huber Problem:}
The \emph{Huber fitting} \cite{Huber:1964,Huber:1981} or \emph{robust least squares} problem for a given matrix $A$ and vector $b$ is defined as 
\[
\min_{x}\,\,\,{\displaystyle\sum\limits_{i = 1}^m\phi(a_i^T x - b_i),}
\]
where $a_i\tpose$ is the $i^\textrm{th}$ row of $A$ and the \emph{Huber loss function} $\phi : \Re \to \Re$ is defined as
\begin{equation*}
\phi(w) = \begin{cases}
w^2 & |w| \le M \\
M(2|w| - M) & |w| > M.
\end{cases}
\end{equation*}

We set $M = 1$ for all test cases.   This problem is equivalent \cite{Mangasarian:2000} to the following quadratic program:
\MinProblem{x,u,r,s}{u\tpose u + 2M{\bf{1}}\tpose (r+s)}
{
  \begin{aligned}[t] \nonumber
  Ax - b - u &= r-s \\
  (r,s) &\ge 0.
  \end{aligned}
}

\paragraph{LASSO Problem:}
The \emph{least absolute shrinkage and selection operator (LASSO)} problem \cite{Tibshirani:1996,Candes:2008} for a given matrix $A$ and vector $b$ is defined as
\UnconstrainedMinProblem{x}{\norm{Ax-b}^2_2 + \lambda \norm{x}_1.\nonumber}
We set $\lambda = \norm{A\tpose b}_\infty$ for all test cases.  This problem is equivalent \cite{Mangasarian:2000} to the following quadratic program:
\MinProblem{y,x,t}{y\tpose y + \lambda {\bf{1}}\tpose t}
{
  \begin{aligned}[t] \nonumber
  &Ax - b = y \\
  &-t \le x \le t.
  \end{aligned}
}

\paragraph{Results} Results for this benchmark set of 46 problems are shown in Figure~\ref{fig:sslsq}.   Clarabel is again the fastest solver overall with the Rust implementation marginally faster.  In this test set only the Clarabel and Gurobi solvers are able to solve all cases to full accuracy.   

\captionsetup{labelfont=bf}
\begin{figure}[t]
  \centering
  \caption{\bf Performance profiles for the SuiteSparse least-squares problem set}
  \label{fig:sslsq}
  \begin{subfigure}[b]{0.49\textwidth}
      \centering
      {\includegraphics[width=\textwidth]{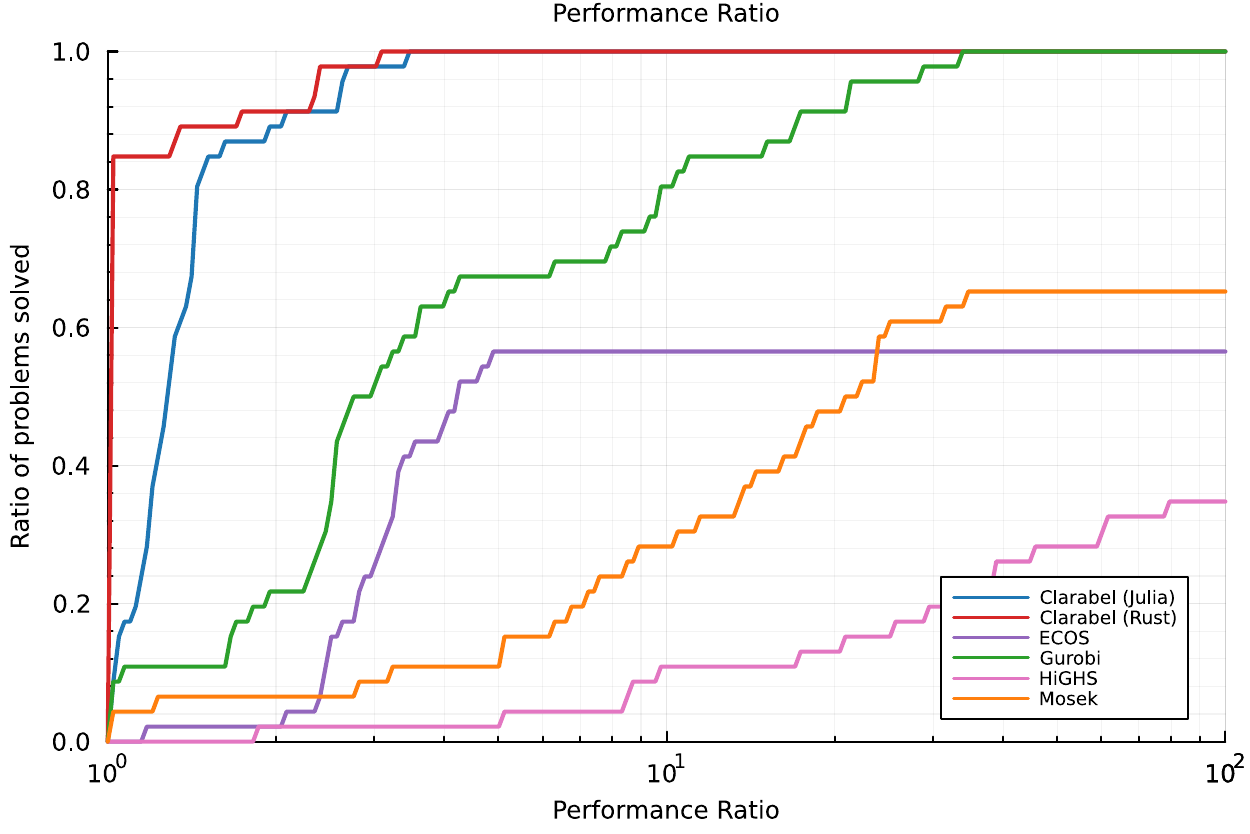}}
      \caption{Relative performance profile}
      \label{fig:sslsq:relative}
  \end{subfigure}
  \hfill
  \begin{subfigure}[b]{0.49\textwidth}
      \centering
      {\includegraphics[width=\textwidth]{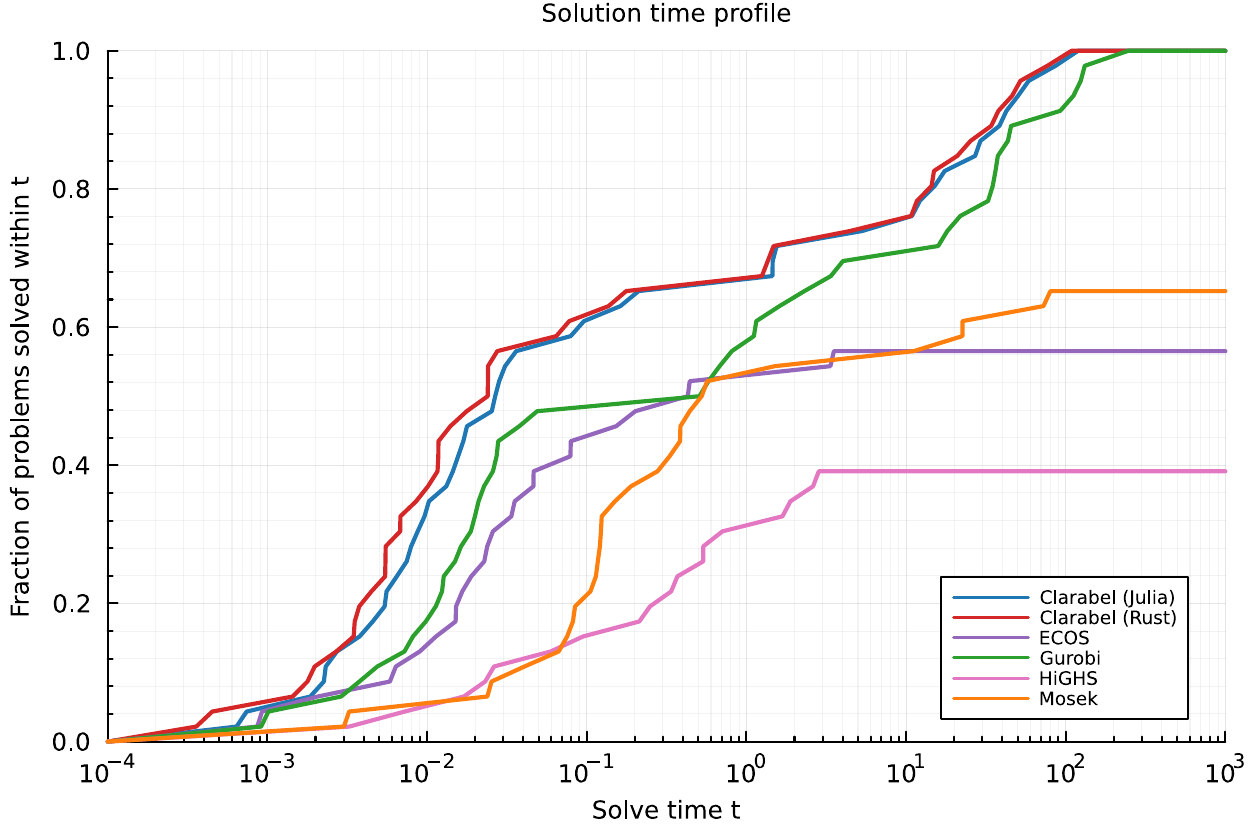}}
      \caption{Absolute performance profile}
      \label{fig:sslsq:absolute}
  \end{subfigure}
  \begin{subfigure}{1\textwidth}
    \centering
    \footnotesize
    \begin{tabular}{llcccccc}
  \hline
   &  & \textbf{ClarabelRs} & \textbf{Clarabel} & \textbf{ECOS} & \textbf{Gurobi} & \textbf{HiGHS} & \textbf{Mosek} \\\hline
  Shifted GM & Full Acc. & 1.0 & 1.06 & 7.12 & 1.79 & 21.5 & 6.31 \\
   & Low Acc. & 1.0 & 1.06 & 5.43 & 1.79 & 21.5 & 1.68 \\
  Failure Rate (\%) & Full Acc. & 0.0 & 0.0 & 43.5 & 0.0 & 60.9 & 34.8 \\
   & Low Acc. & 0.0 & 0.0 & 39.1 & 0.0 & 60.9 & 0.0 \\\hline
\end{tabular}

    \caption{Benchmark timings as shifted geometric mean and failure rates}
  \end{subfigure}
\end{figure}

\subsubsection{Constrained optimal control}

Finally, we consider finite-horizon constrained optimal control problems with quadratic objectives.  Problems of this type are of particular interest in embedded control systems, since the repeated online solution of such problems is the basis of the model predictive control (MPC) method~\cite{Borrelli:MPCbook}.     We consider a collection of 72 such problems taken from the benchmark collection of industrial and academic applications in \cite{Kouzoupis:2015}.  Problems in this collection are in the form
\MinProblem{\{x_i\}, \{y_i\}, \{u_i\}}{
\begin{aligned}[t] \nonumber
  \displaystyle\sum\limits_{i=0}^{N-1} &
  \begin{pmatrix}
    y_i - y^r_i \\
    u_i - u^r_i
  \end{pmatrix}
  \begin{pmatrix}
    Q_k & S_k \\ S_k^T & R_k
  \end{pmatrix}
  \begin{pmatrix}
    y_i - y^r_i\\
    u_i - u^r_i
  \end{pmatrix} \hfill 
  +   \begin{pmatrix}
    g^y_k \\ g_u^k 
  \end{pmatrix}\tpose 
  \begin{pmatrix}
    y_i - y^r_i\\
    u_i - u^r_i
  \end{pmatrix}  \\[1ex] 
  &+ (x_N - x^r_N)\tpose P(x_N - x^r_N) \\[2ex]
\end{aligned}
}
{
  \begin{gathered}[t]
    \\[-3ex]
  \begin{aligned}[t]
    &
    \left.
    \begin{aligned}
&x_{k+1} = A_k x_k + B_ku_k + f_k \\[0.5ex]
&y_{k}   = C_k x_k + D_ku_k + e_k \\[0.5ex]
&d_k^\ell \le M_k x_k + N_ku_k \le d_k^u \\[0.5ex]
&u_k \in \mathcal{U}_k,~y_k \in \mathcal{Y}_k \\[0.5ex]
    \end{aligned}  \quad \right\} \quad k = 0 \dots N-1 \\[0.5ex]
    &Tx_n \in \mathcal{\mathcal{T}},
  \end{aligned}
\end{gathered}
}

where the constraint sets $\mathcal{U}_k$, $\mathcal{Y}_k$ and $\mathcal{T}$ are interval constraints.   All problems have ${Q}_k \succeq 0$, ${R}_k \succeq 0$ and $P \succ 0$, which ensures that the problems are all convex QPs.  As is typical of optimal control problems for embedded systems, the dimension of the states $x_k$ and inputs $u_k$ are relatively small (max 12 and 4, respectively), with horizons $N$ up to 100.  

\paragraph{Results} Results for this benchmark set are shown in Figure~\ref{fig:mpc}.   Clarabel is the fastest solver overall, and is the only solver tested with a 100\% success rate in solving problems to full accuracy.

\captionsetup{labelfont=bf}
\begin{figure}[t]
  \centering
  \caption{\bf Performance profiles for the optimal control problem set}
  \label{fig:mpc}
  \begin{subfigure}[b]{0.49\textwidth}
      \centering
      {\includegraphics[width=\textwidth]{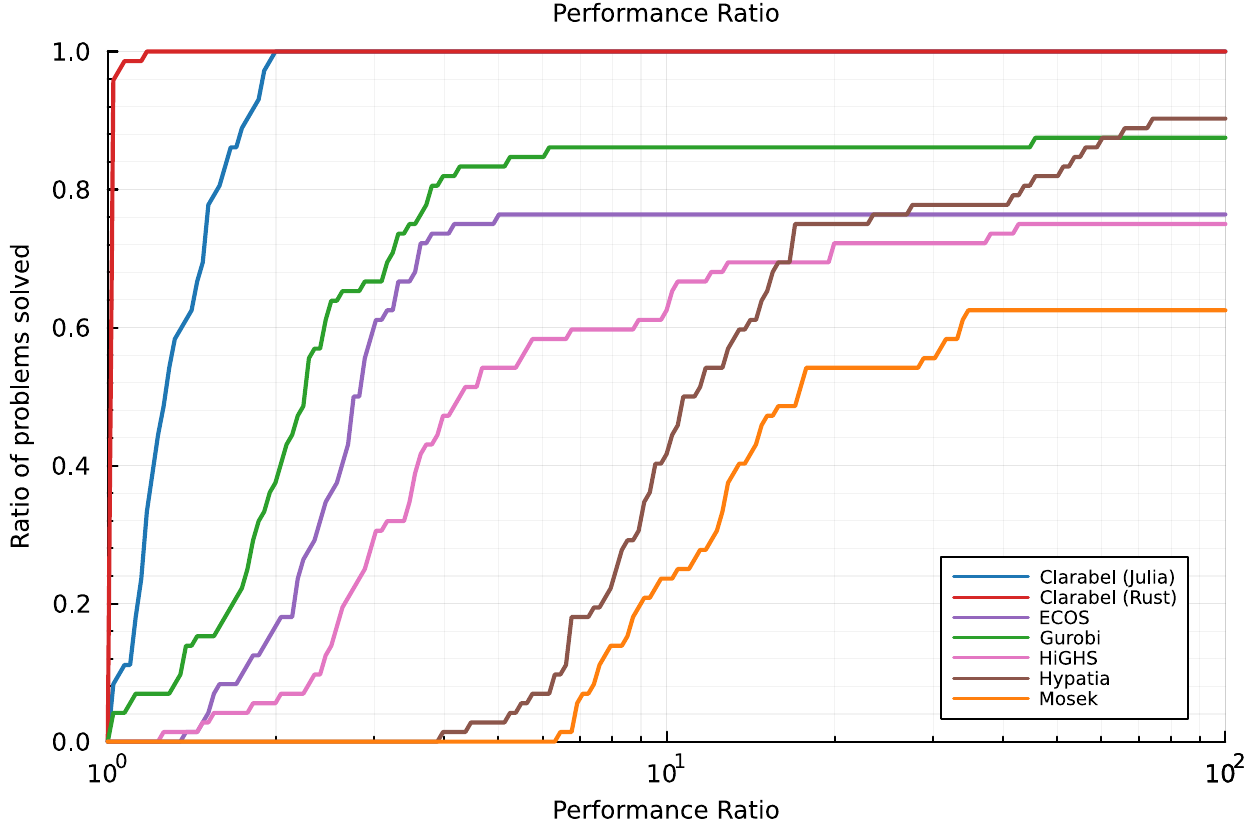}}
      \caption{Relative performance profile}
      \label{fig:mpc:relative}
  \end{subfigure}
  \hfill
  \begin{subfigure}[b]{0.49\textwidth}
      \centering
      {\includegraphics[width=\textwidth]{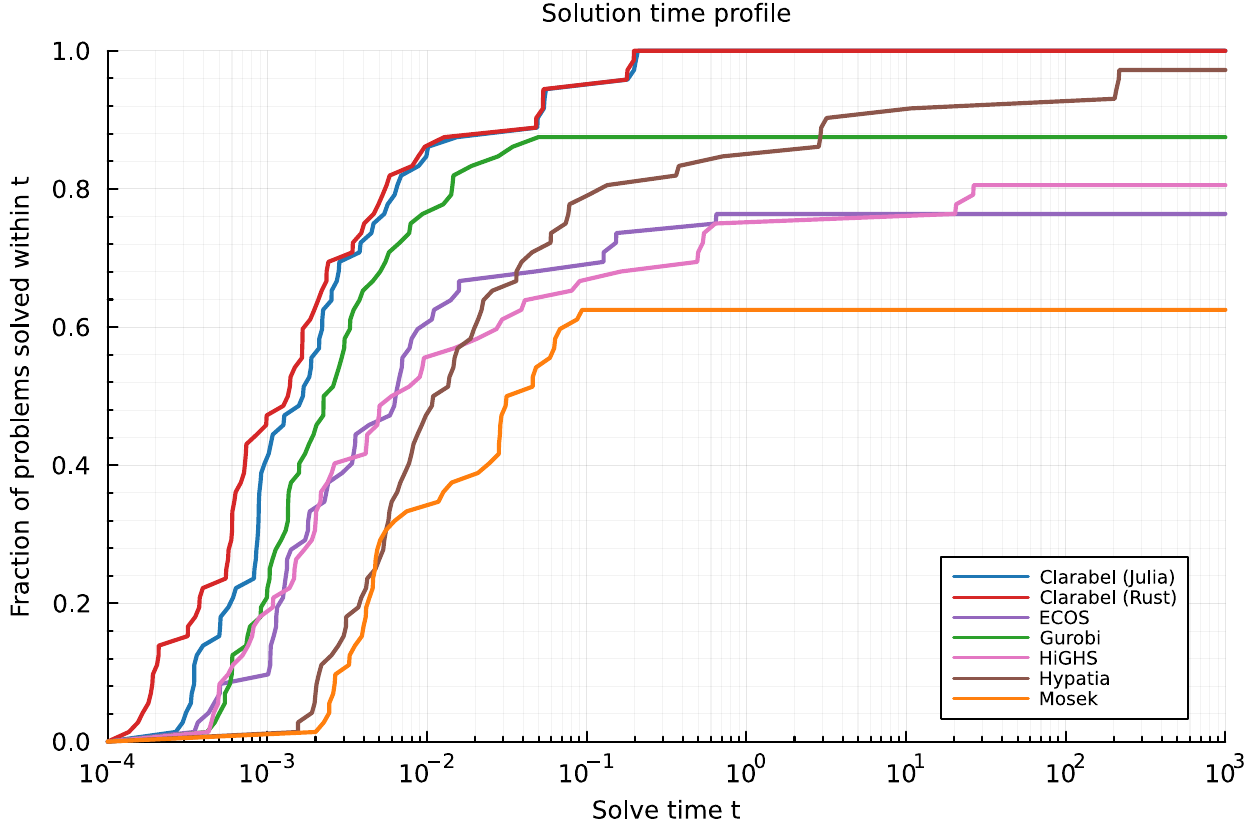}}
      \caption{Absolute performance profile}
      \label{fig:mpc:absolute}
  \end{subfigure}
  \begin{subfigure}{1\textwidth}
    \centering
    \footnotesize
    \begin{tabular}{llccccccc}
  \hline
   &  & \textbf{ClarabelRs} & \textbf{Clarabel} & \textbf{ECOS} & \textbf{Gurobi} & \textbf{HiGHS} & \textbf{Hypatia} & \textbf{Mosek} \\\hline
  Shifted GM & Full Acc. & 1.0 & 1.06 & 197.57 & 70.48 & 185.45 & 54.95 & 511.01 \\
   & Low Acc. & 1.0 & 1.06 & 14.97 & 60.07 & 185.45 & 37.36 & 8.26 \\
  Failure Rate (\%) & Full Acc. & 0.0 & 0.0 & 23.6 & 12.5 & 19.4 & 2.8 & 37.5 \\
   & Low Acc. & 0.0 & 0.0 & 2.8 & 11.1 & 19.4 & 0.0 & 0.0 \\\hline
\end{tabular}

    \caption{Benchmark timings as shifted geometric mean and failure rates}
  \end{subfigure}
\end{figure}

\subsection{Benchmark problems with linear objectives}

In this section we present benchmark results for optimization problems \emph{without} quadratic objective terms, i.e.\ with $P = 0$ in \ref{eqn:primal}.   We again consider example problems taken or generated from standard open-source problem collections and covering a wide range of problem dimensions.   Our test set covers cases with constraints on the positive orthant (i.e.\ linear programs), as well as both second-order and exponential cone programs.

\subsubsection{NETLIB LP problems}
We first consider linear programming (LP) problems taken from the NETLIB collection, a standard collection of benchmark LPs~\cite{netlib:1987, netlib}.   Our benchmark test set includes 117 feasible and 29 infeasible test cases, representing all NETLIB LP problem instances with source files not exceeding 2.5 MB.   

Results for the feasible and infeasible test sets are shown in Figures~\ref{fig:netlib_feasible} and \ref{fig:netlib_infeasible}, respectively.   For these cases Clarabel has performance broadly similar to both Mosek and Gurobi for the feasible set, and somewhat slower for the infeasible set.   This result is to be expected since most of the potential performance advantage of our method arises from improved handling of quadratic objectives, but illustrates that our implementation is, in the LP case, still broadly comparable.

\captionsetup{labelfont=bf}
\begin{figure}[ht]
  \centering
  \caption{\bf Performance profiles for the NETLIB Feasible LP problem set}
  \label{fig:netlib_feasible}
  \begin{subfigure}[b]{0.49\textwidth}
      \centering
      {\includegraphics[width=\textwidth]{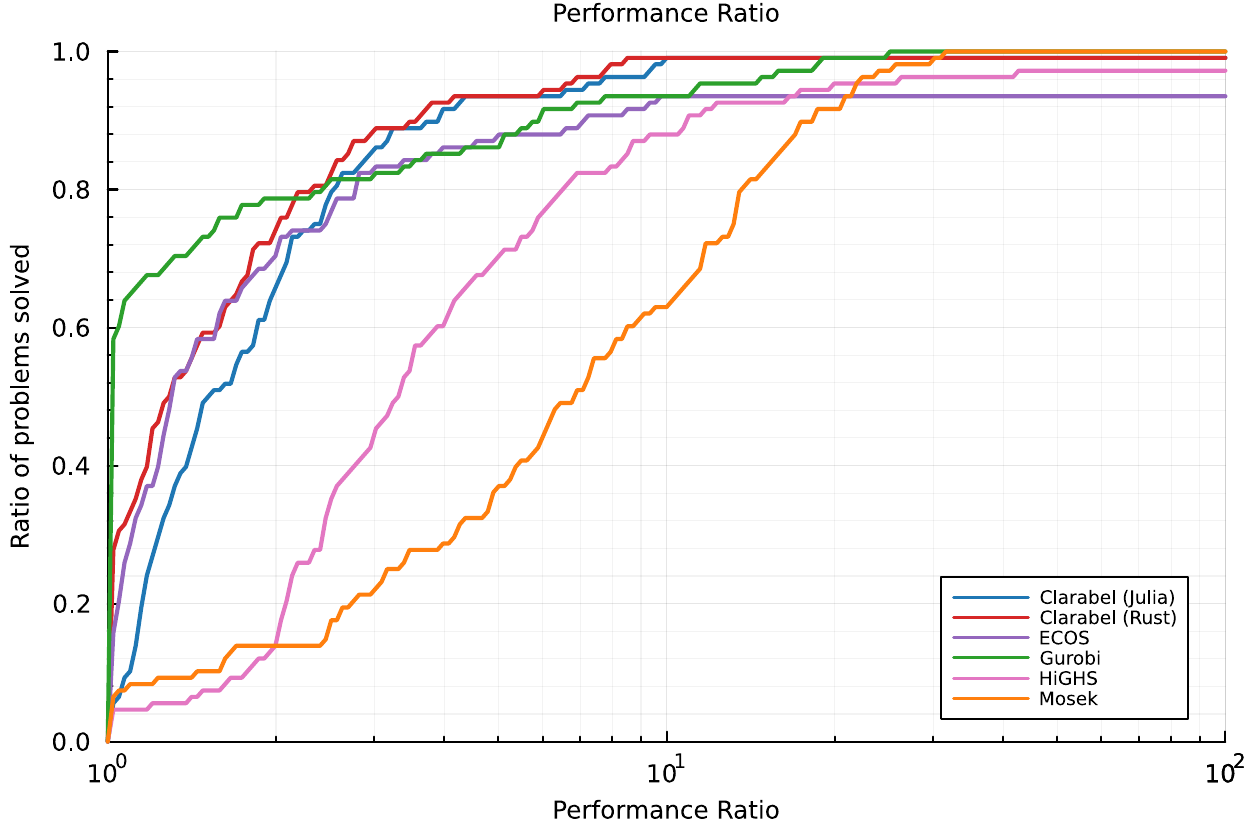}}
      \caption{Relative performance profile}
      \label{fig:netlib_feasible:relative}
  \end{subfigure}
  \hfill
  \begin{subfigure}[b]{0.49\textwidth}
      \centering
      {\includegraphics[width=\textwidth]{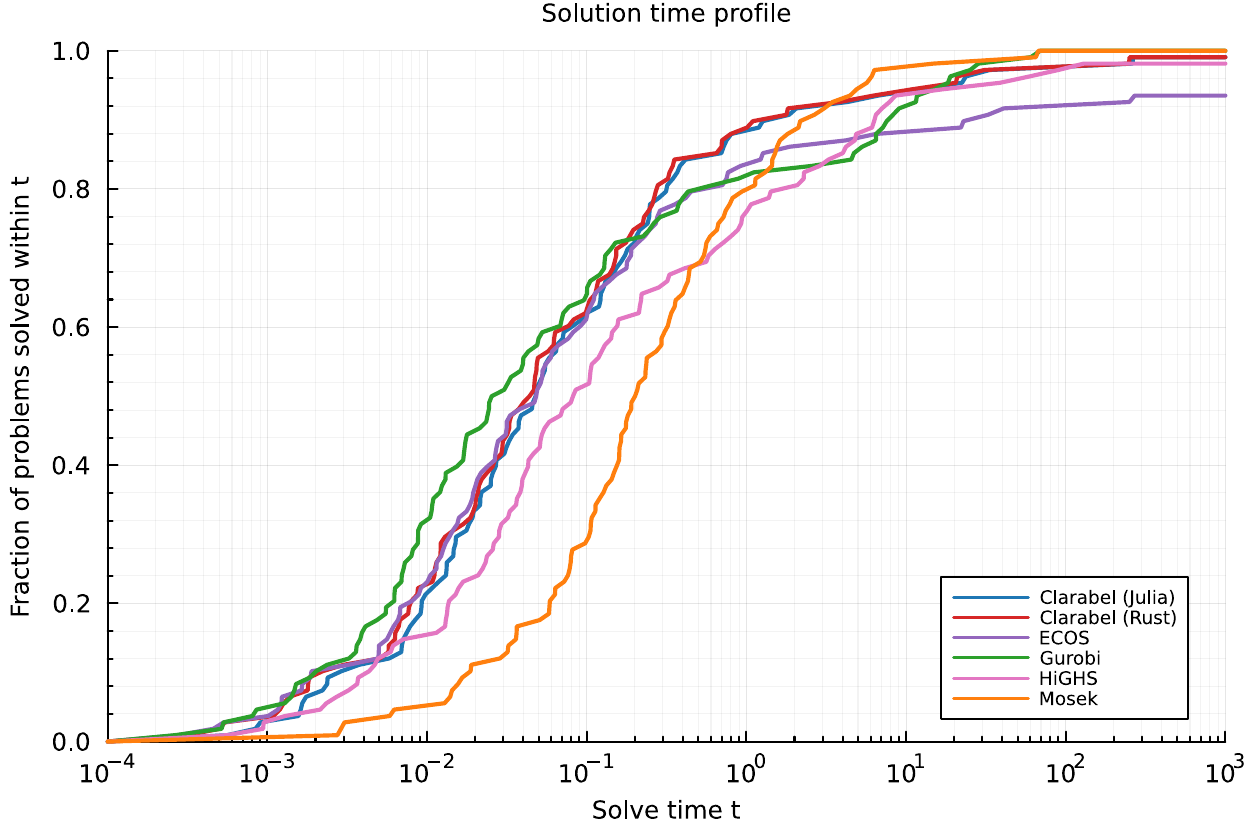}}
      \caption{Absolute performance profile}
      \label{fig:netlib_feasible:absolute}
  \end{subfigure}
  \begin{subfigure}{1\textwidth}
    \centering
    \footnotesize
    \begin{tabular}{llcccccc}
  \hline
   &  & \textbf{ClarabelRs} & \textbf{Clarabel} & \textbf{ECOS} & \textbf{Gurobi} & \textbf{HiGHS} & \textbf{Mosek} \\\hline
  Shifted GM & Full Acc. & 1.0 & 1.04 & 2.13 & 1.3 & 1.79 & 1.2 \\
   & Low Acc. & 1.0 & 1.04 & 1.08 & 1.3 & 1.79 & 1.2 \\
  Failure Rate (\%) & Full Acc. & 0.9 & 0.9 & 6.5 & 0.0 & 1.9 & 0.0 \\
   & Low Acc. & 0.9 & 0.9 & 0.9 & 0.0 & 1.9 & 0.0 \\\hline
\end{tabular}

    \caption{Benchmark timings as shifted geometric mean and failure rates}
  \end{subfigure}
\end{figure}

\captionsetup{labelfont=bf}
\begin{figure}[ht]
  \centering
  \caption{\bf Performance profiles for the NETLIB Infeasible LP problem set}
  \label{fig:netlib_infeasible}
  \begin{subfigure}[b]{0.49\textwidth}
      \centering
      {\includegraphics[width=\textwidth]{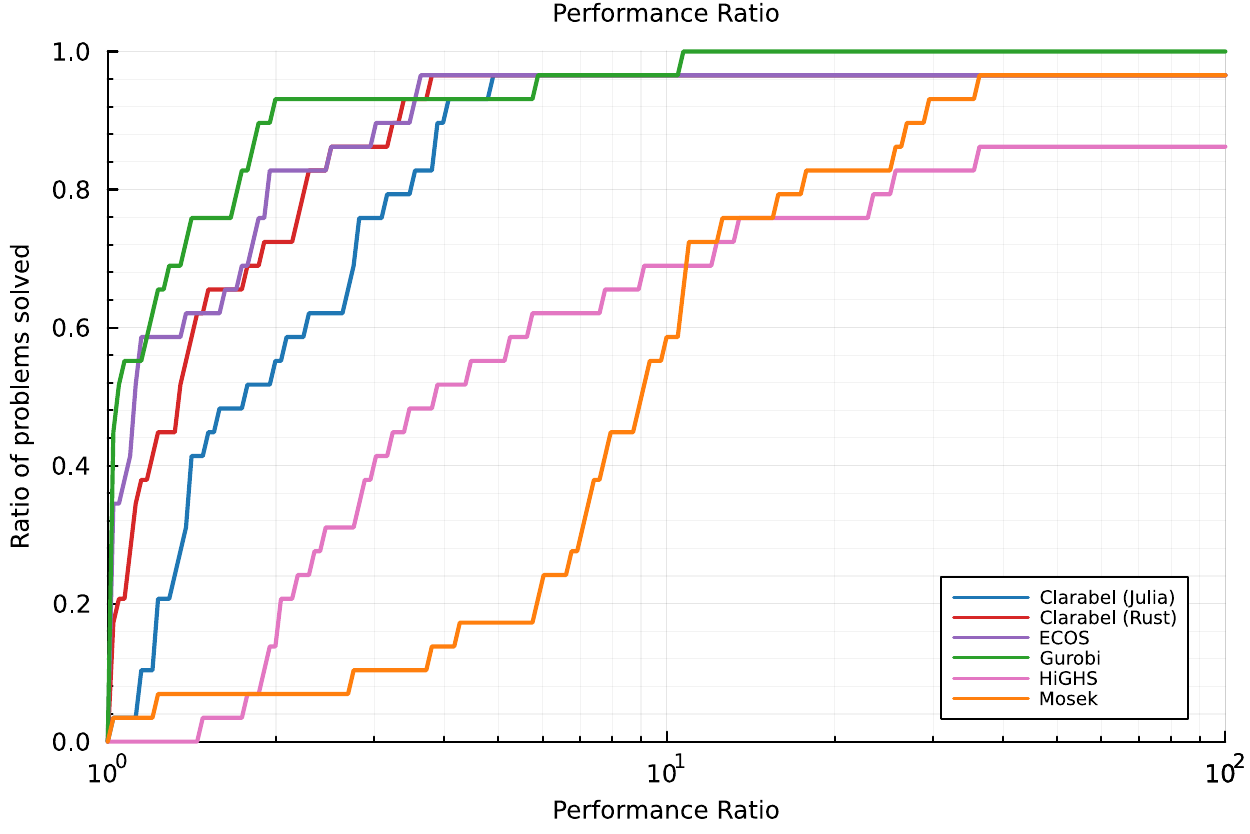}}
      \caption{Relative performance profile}
      \label{fig:netlib_infeasible:relative}
  \end{subfigure}
  \hfill
  \begin{subfigure}[b]{0.49\textwidth}
      \centering
      {\includegraphics[width=\textwidth]{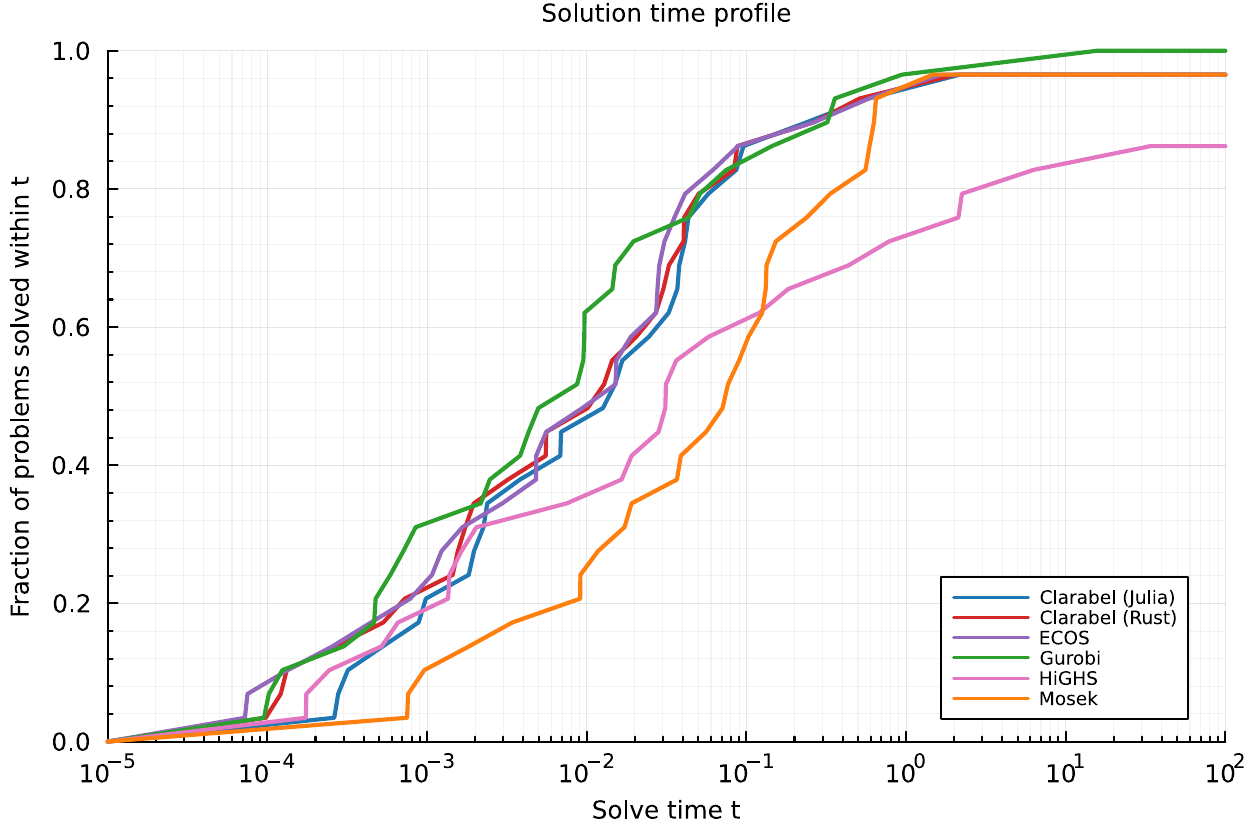}}
      \caption{Absolute performance profile}
      \label{fig:netlib_infeasible:absolute}
  \end{subfigure}
  \begin{subfigure}{1\textwidth}
    \centering
    \footnotesize
    \begin{tabular}{llcccccc}
  \hline
   &  & \textbf{ClarabelRs} & \textbf{Clarabel} & \textbf{ECOS} & \textbf{Gurobi} & \textbf{HiGHS} & \textbf{Mosek} \\\hline
  Shifted GM & Full Acc. & 1.89 & 1.92 & 1.85 & 1.0 & 12.19 & 2.49 \\
   & Low Acc. & 1.89 & 1.92 & 1.85 & 1.0 & 12.19 & 2.49 \\
  Failure Rate (\%) & Full Acc. & 3.4 & 3.4 & 3.4 & 0.0 & 13.8 & 3.4 \\
   & Low Acc. & 3.4 & 3.4 & 3.4 & 0.0 & 13.8 & 3.4 \\\hline
\end{tabular}

    \caption{Benchmark timings as shifted geometric mean and failure rates}
  \end{subfigure}
\end{figure}

\subsubsection{CBLIB exponential cone problems}

We consider a collection of 39 exponential cone programs from the CBLIB benchmark collection~\cite{CBLIB} in order to test performance on nonsymmetric cone programs.   For these problems our implementation has broadly similar performance to Mosek, with slightly faster solve times in the Rust implementation despite a generally higher iteration count.

\captionsetup{labelfont=bf}
\begin{figure}[ht]
  \centering
  \caption{\bf Performance profiles for the CBLIB Exponential Cone problem set}
  \label{fig:cblib_exp}
  \begin{subfigure}[b]{0.49\textwidth}
      \centering
      {\includegraphics[width=\textwidth]{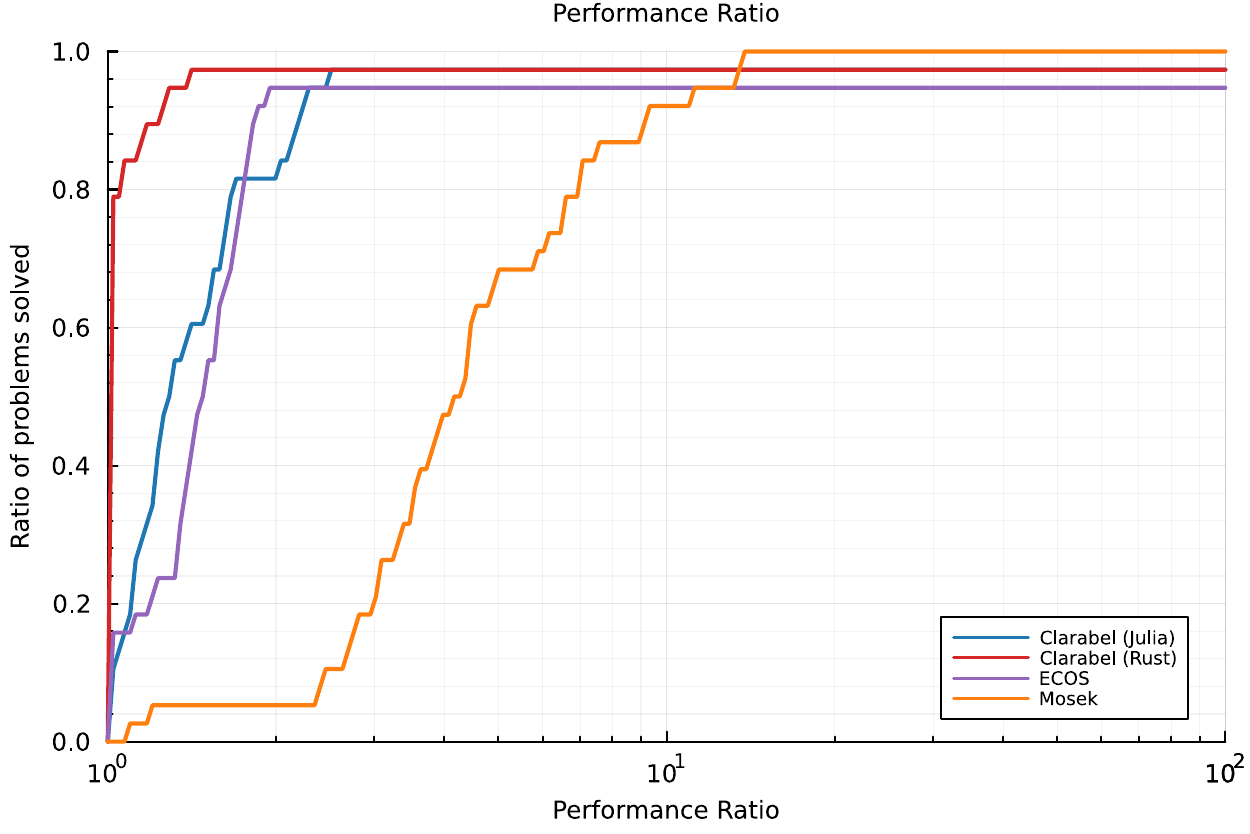}}
      \caption{Relative performance profile}
      \label{fig:cblib_exp:relative}
  \end{subfigure}
  \hfill
  \begin{subfigure}[b]{0.49\textwidth}
      \centering
      {\includegraphics[width=\textwidth]{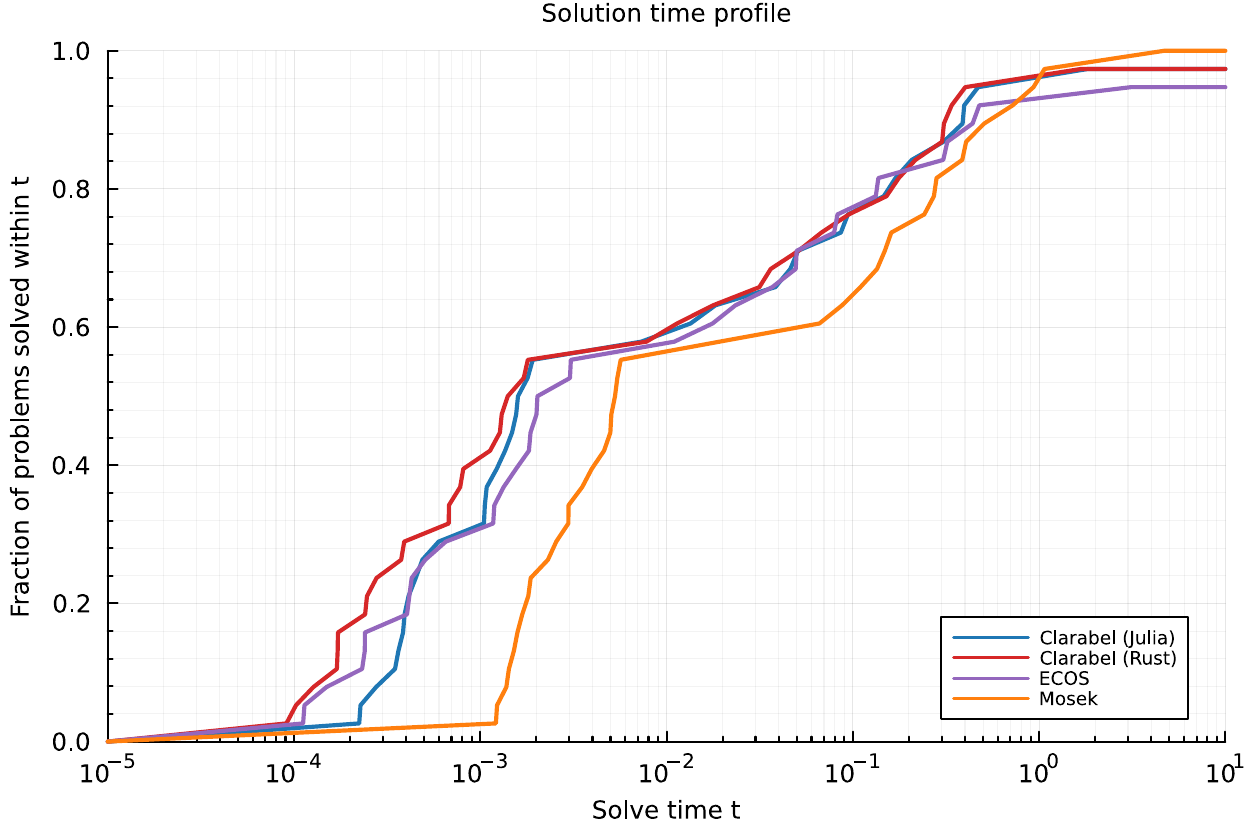}}
      \caption{Absolute performance profile}
      \label{fig:cblib_exp:absolute}
  \end{subfigure}
  \begin{subfigure}{1\textwidth}
    \centering
    \footnotesize
    \begin{tabular}{llcccc}
  \hline
   &  & \textbf{ClarabelRs} & \textbf{Clarabel} & \textbf{ECOS} & \textbf{Mosek} \\\hline
  Shifted GM & Full Acc. & 1.45 & 1.49 & 2.68 & 1.0 \\
   & Low Acc. & 1.0 & 1.08 & 3.28 & 2.14 \\
  Failure Rate (\%) & Full Acc. & 2.6 & 2.6 & 5.3 & 0.0 \\
   & Low Acc. & 0.0 & 0.0 & 2.6 & 0.0 \\\hline
\end{tabular}

    \caption{Benchmark timings as shifted geometric mean and failure rates}
  \end{subfigure}
\end{figure}

\clearpage

\subsubsection{Optimal power flow}

We next consider a collection of optimal power flow problems based on power networks from IEEE PLS PGLib-OPF benchmark library \cite{PGLIB:2019} and constructed using the \texttt{PowerModels.jl} benchmark test framework \cite{PowerModels:2018}.   This framework allows for the generation of optimal power flow problems with various modelling assumptions and convex relaxations applied \cite{PowerModels:survey1,PowerModels:survey2}.   We consider in particular linear programming problems based on linearized (i.e.\ direct current (DC)) power flow models \cite{PowerModels:LPcase}, and SOCPs arising from second-order cone relaxations of AC models \cite{PowerModels:SOCcase}.   Results from these benchmarks are shown in Figures~\ref{fig:opf_lp} and \ref{fig:opf_socp}, respectively.  

For both of these test sets Clarabel outperforms the other solvers tested, albeit with a slightly higher failure rate for the LP benchmark tests relative to Gurobi (i.e.\ 3.3\% vs 0\% at full accuracy).   For the SOCP benchmark tests in particular our success rate is substantially better than the other solvers tested.   All solvers in our benchmark group struggled to some extent in solve large scale SOCPs to full accuracy.   Our Rust implementation is able to solve to at least its reduced accuracy level for more than 99\% of test cases though, a success rate considerably higher than ECOS or Mosek, both of which failed even at reduced accuracy on more than 50\% of cases.   

We note also that our success rates differ slightly between our Julia and Rust implementations, even though the implementation of our algorithm is (nearly) identical between the cases.  We believe that this difference is attributable to minor differences in compiled code vectorizations and optimizations, which in some very difficult problems lead to slight differences in behavior at high accuracy.

\captionsetup{labelfont=bf}
\begin{figure}[ht]
  \centering
  \caption{\bf Performance profiles for the LP Optimal Power Flow problem set}
  \label{fig:opf_lp}
  \begin{subfigure}[b]{0.49\textwidth}
      \centering
      {\includegraphics[width=\textwidth]{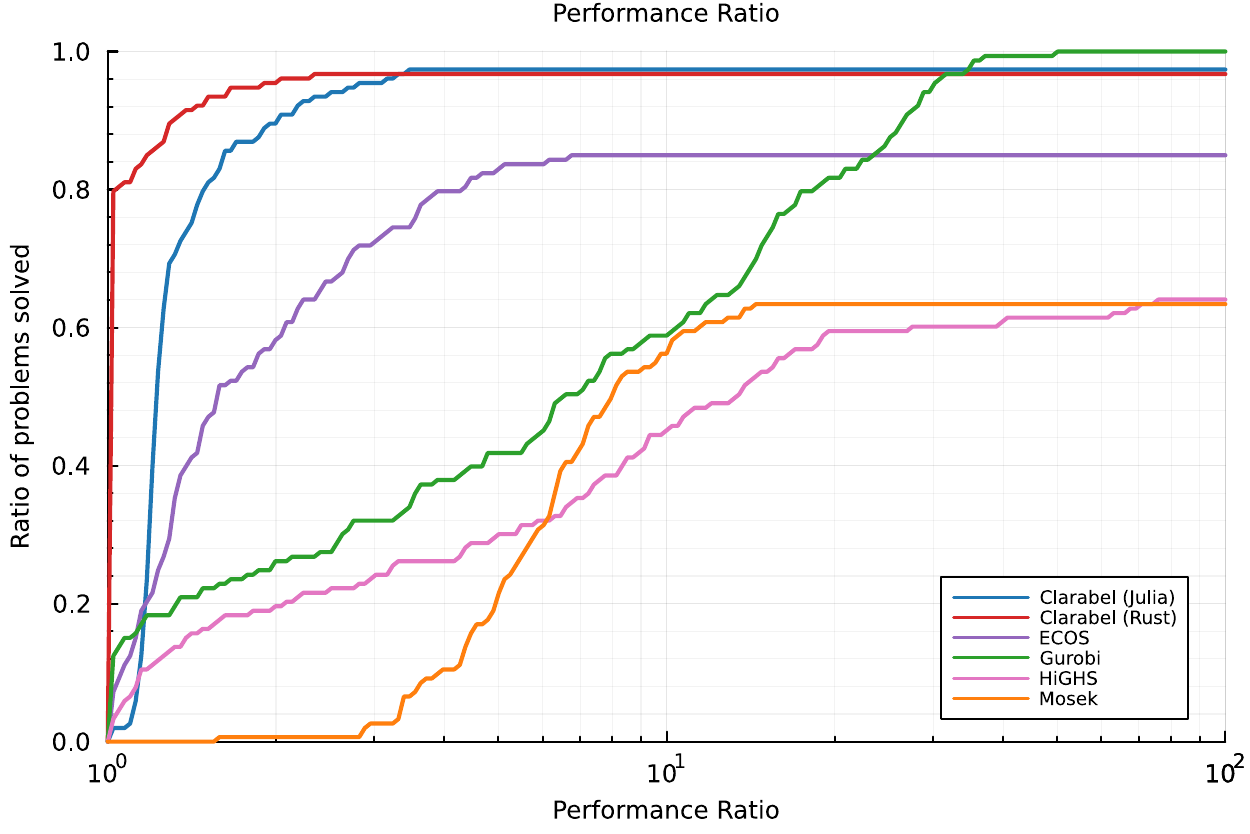}}
      \caption{Relative performance profile}
      \label{fig:opf_lp:relative}
  \end{subfigure}
  \hfill
  \begin{subfigure}[b]{0.49\textwidth}
      \centering
      {\includegraphics[width=\textwidth]{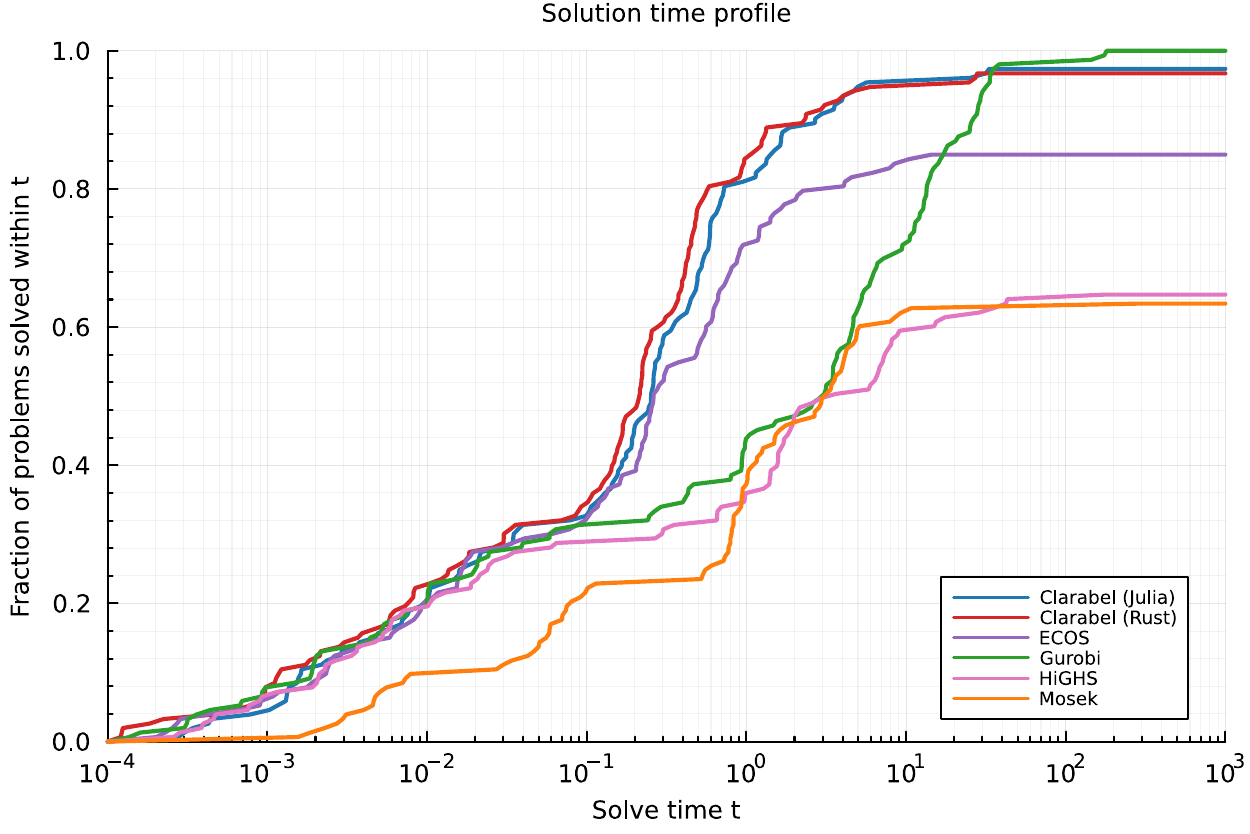}}
      \caption{Absolute performance profile}
      \label{fig:opf_lp:absolute}
  \end{subfigure}
  \begin{subfigure}{1\textwidth}
    \centering
    \footnotesize
    \begin{tabular}{llcccccc}
  \hline
   &  & \textbf{ClarabelRs} & \textbf{Clarabel} & \textbf{ECOS} & \textbf{Gurobi} & \textbf{HiGHS} & \textbf{Mosek} \\\hline
  Shifted GM & Full Acc. & 1.0 & 1.04 & 3.27 & 4.48 & 17.73 & 17.58 \\
   & Low Acc. & 1.0 & 1.07 & 2.89 & 5.57 & 22.04 & 21.85 \\
  Failure Rate (\%) & Full Acc. & 3.3 & 2.6 & 15.0 & 0.0 & 35.3 & 36.6 \\
   & Low Acc. & 1.3 & 0.7 & 9.8 & 0.0 & 35.3 & 36.6 \\\hline
\end{tabular}

    \caption{Benchmark timings as shifted geometric mean and failure rates}
  \end{subfigure}
\end{figure}

\captionsetup{labelfont=bf}
\begin{figure}[ht]
  \centering
  \caption{\bf Performance profiles for the SOCP Optimal Power Flow problem set}
  \label{fig:opf_socp}
  \begin{subfigure}[b]{0.49\textwidth}
      \centering
      {\includegraphics[width=\textwidth]{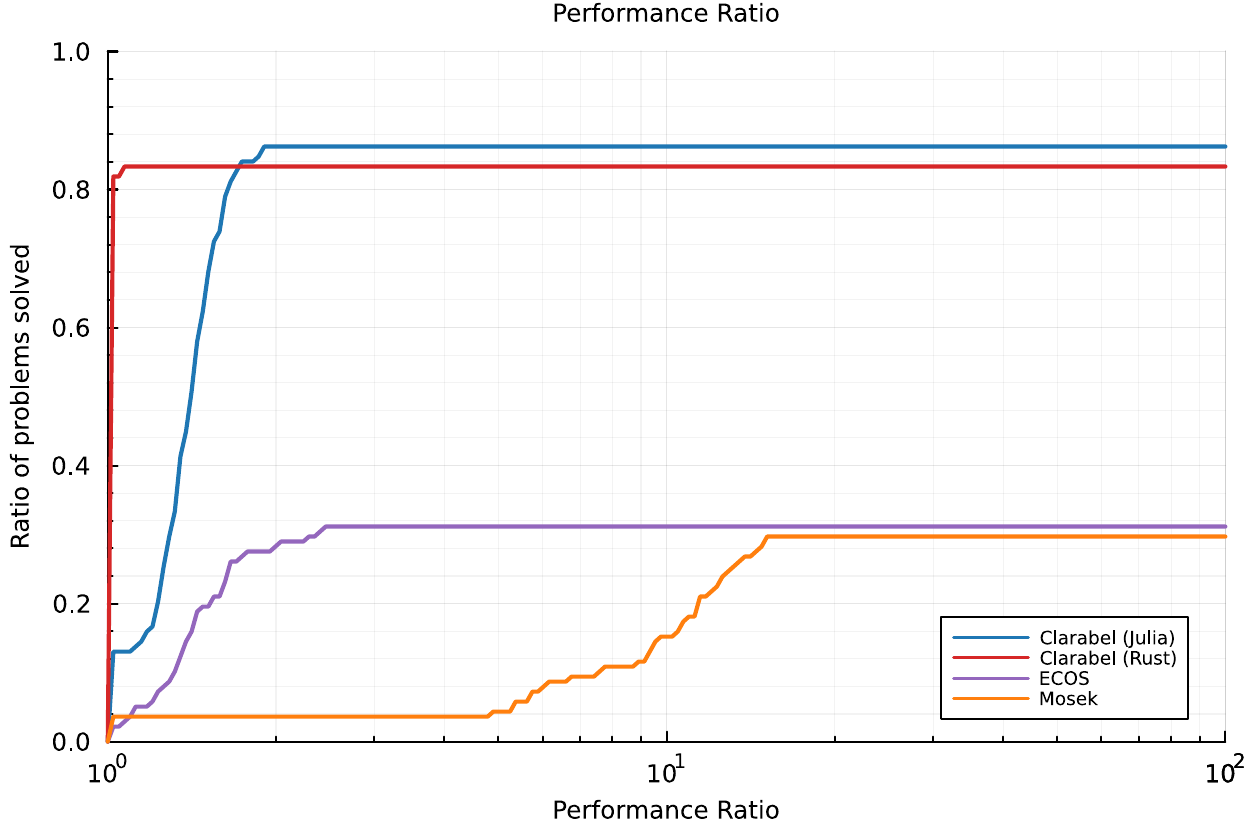}}
      \caption{Relative performance profile}
      \label{fig:opf_socp:relative}
  \end{subfigure}
  \hfill
  \begin{subfigure}[b]{0.49\textwidth}
      \centering
      {\includegraphics[width=\textwidth]{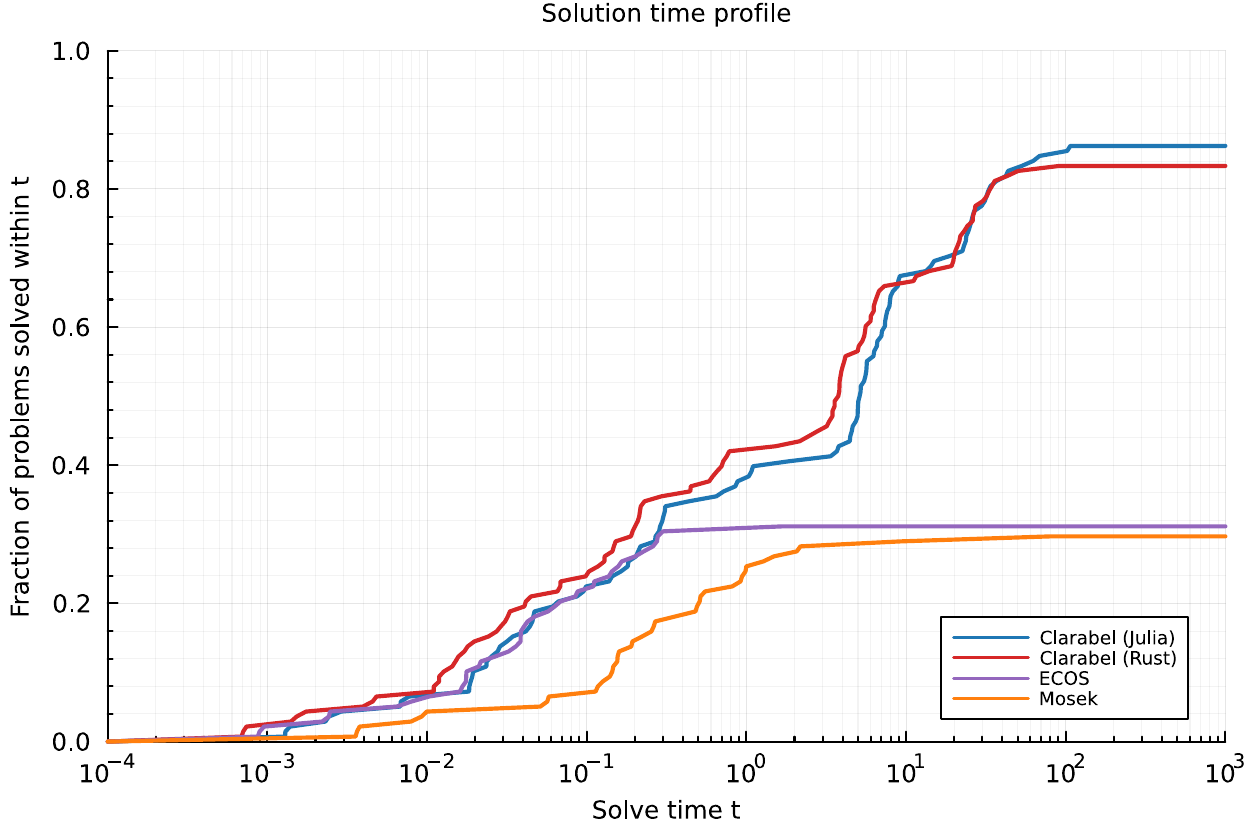}}
      \caption{Absolute performance profile}
      \label{fig:opf_socp:absolute}
  \end{subfigure}
  \begin{subfigure}{1\textwidth}
    \centering
    \footnotesize
    \begin{tabular}{llcccc}
  \hline
   &  & \textbf{ClarabelRs} & \textbf{Clarabel} & \textbf{ECOS} & \textbf{Mosek} \\\hline
  Shifted GM & Full Acc. & 1.0 & 1.07 & 8.25 & 10.02 \\
   & Low Acc. & 1.0 & 1.33 & 8.62 & 20.48 \\
  Failure Rate (\%) & Full Acc. & 16.7 & 13.8 & 68.8 & 70.3 \\
   & Low Acc. & 0.7 & 2.2 & 55.8 & 70.3 \\\hline
\end{tabular}

    \caption{Benchmark timings as shifted geometric mean and failure rates}
  \end{subfigure}
\end{figure}

\subsection{Semidefinite program benchmarks}

\captionsetup{labelfont=bf}
\begin{figure}[th]
  \centering
  \caption{\bf Performance profiles for the SDPLIB Semidefinite Programming problem set}
  \label{fig:sdplib}
  \begin{subfigure}[b]{0.49\textwidth}
      \centering
      {\includegraphics[width=\textwidth]{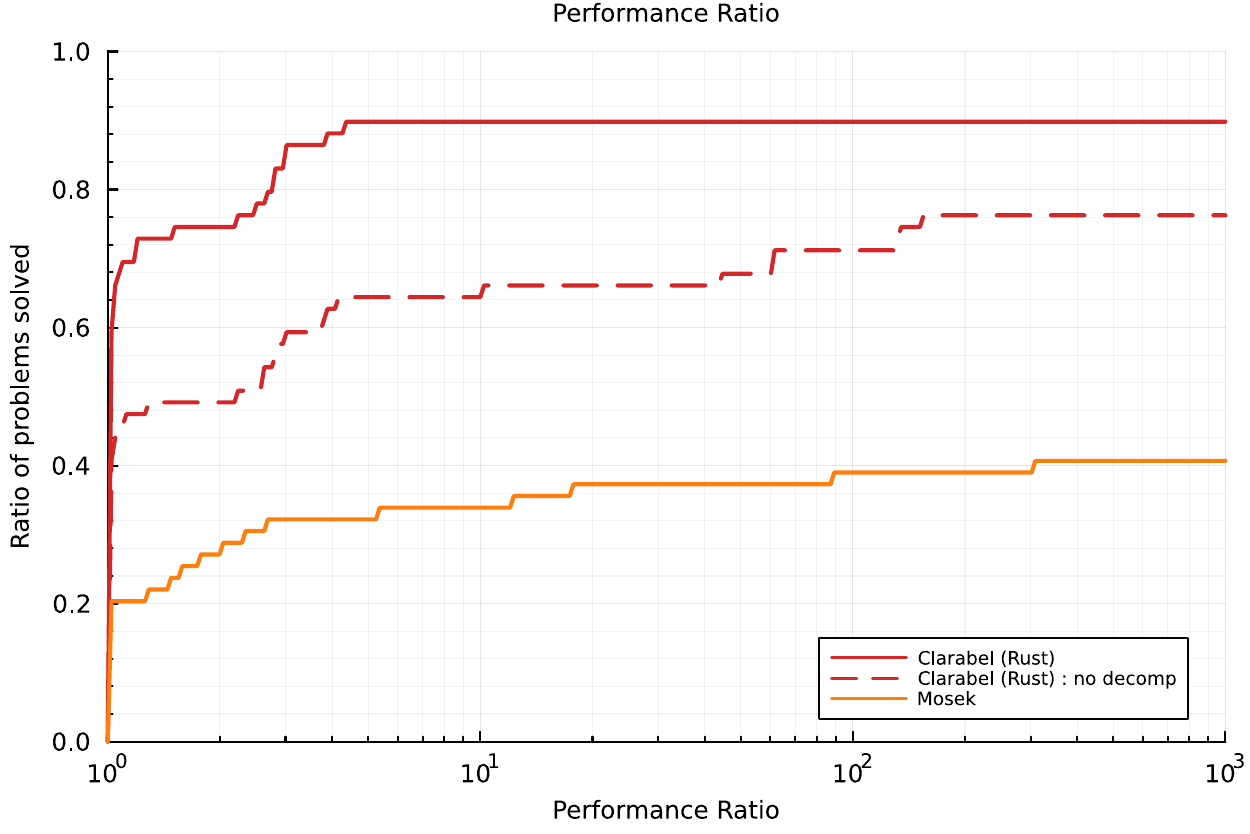}}
      \caption{Relative performance profile}
      \label{fig:sdplib:relative}
  \end{subfigure}
  \hfill
  \begin{subfigure}[b]{0.49\textwidth}
      \centering
      {\includegraphics[width=\textwidth]{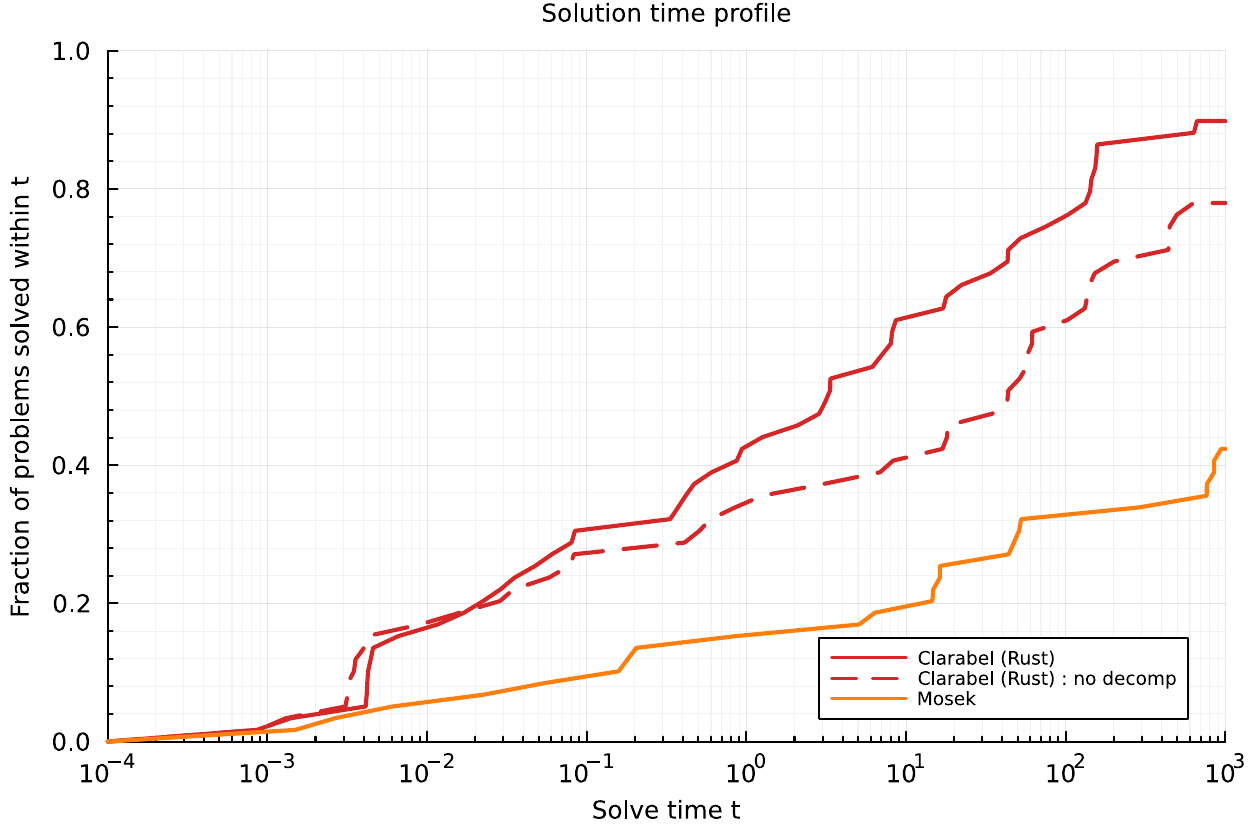}}
      \caption{Absolute performance profile}
      \label{fig:sdplib:absolute}
  \end{subfigure}
  \begin{subfigure}{1\textwidth}
    \centering
    \footnotesize
    \newcommand{\nochordal}{
    \begin{tabular}{@{}c@{}}\textbf{ClarabelRs}\\(no decomp)\end{tabular}
}

\newcommand{\raisesolverbox}[1]{
    \begin{tabular}{@{}c@{}}\textbf{#1}\\~\end{tabular}
}

\begin{tabular}{llccc}
  \hline
   &  & \raisesolverbox{ClarabelRs} & {\nochordal} & \raisesolverbox{Mosek} \\\hline
  Shifted GM & Full Acc. & 1.0 & 2.53 & 9.95 \\
   & Low Acc. & 1.0 & 2.17 & 1.26 \\
  Failure Rate (\%) & Full Acc. & 10.2 & 22.0 & 57.6 \\
   & Low Acc. & 1.7 & 10.2 & 1.7 \\\hline
\end{tabular}

    \caption{Benchmark timings as shifted geometric mean and failure rates}
  \end{subfigure}
\end{figure}

Finally, we test performance on large-scale SDPs using the SDPLIB benchmark collection~\cite{SDPLIB}.   We consider a collection of 64 problems from this collection, where problems from the collection are included if they are known to be feasible and could be solved on our testing platform using the reference solver Mosek.   Since some of these problems are large, we set a time limit of 30 minutes for each problem.   

For this set of benchmarks we compare only our Rust implementation of Clarabel with Mosek and use the supernodal factorization method of \cite{faer}.  We provide results for Clarabel both with and without the chordal decomposition described in \S\ref{sec:chordal_decomposition} enabled.   

For this set of benchmarks we find that Clarabel outperforms Mosek overall, particularly when considering problems solved to full accuracy solutions.   Clarabel also outperforms Mosek, sometimes very substantially, for those test cases with a considerable amount of sparsity that are amenable to chordal decomposition.    We include in Appendix~\ref{appendix:full_results} a detailed breakdown of the results for this benchmark set, including performance both with and without chordal decomposition enabled.   For problems in which no chordal structure could be identified, there is very little difference in performance between our two solver configurations.   However, some problems (e.g.\ the series of problems `ARCH0'--`ARCH8') show improvements in computation time of nearly 100x when chordal decomposition is enabled.   We note that further improvements are likely possible in our implementation of chordal decomposition, particularly if more sophisticated clique-merging scoring methods are employed.

\section{Conclusions}

We have presented a novel interior-point solver for conic optimization problems with quadratic objectives.   Our method uses a homogeneous embedding inspired by previous work on monotone complementarity problems, but not previously applied to interior-point conic optimization in any widely available solver.   We have shown that our method is competitive with state-of-the-art solvers for a wide range of problem classes, and in particular outperforms state-of-the-art solvers in problems with quadratic objectives (QPs), large-scale SOCPs, and SDPs with significant sparsity structure.   

Our implementation of Clarabel is available as open-source software in both Rust and Julia, with several other language interfaces, and is available as a standard solver in the CVXPY modelling package.   Clarabel already has growing base of both academic and industrial users and has been downloaded several million times since its initial release.

\clearpage

\printbibliography

\appendix
\section{Detailed benchmark results}\label{appendix:full_results}

\clearpage

\captionsetup{labelfont=bf}
\centering
\renewcommand{\detailtablecaption}{\bf Solve times and iteration counts for the Maros-Meszaros problem set}
\begin{landscape}
\scriptsize


\end{landscape}

%

\captionsetup{labelfont=bf}
\centering
\renewcommand{\detailtablecaption}{\bf Solve times and iteration counts for the SDPLIB Semidefinite Programming problem set}
\begin{landscape}
\newcommand{\nochordal}{
    \begin{tabular}{@{}c@{}}ClarabelRs\\(no decomp)\end{tabular}
}

\scriptsize
\begin{longtable}{lcccc||ccc||ccc||ccc||}
\caption{\detailtablecaption}
\\
 & &  & & & \multicolumn{3}{c||}{\underline{iterations}}& \multicolumn{3}{c||}{\underline{time per iteration(s)}}& \multicolumn{3}{c||}{\underline{total time (s)}}\\[2ex] 
Problem & vars. & cons. & nnz(A) & nnz(P)  & ClarabelRs & \nochordal & Mosek & ClarabelRs & \nochordal & Mosek & ClarabelRs & \nochordal & Mosek\\[1ex]
\hline
\endhead
\sc{truss1} & 19 & 6 & 25 & 0 &   11 &   11 &  \winner  10 &  \winner  7.8e-05 &   8.02e-05 &   0.00015 &  \winner  0.000858 &   0.000882 &   0.0015\\ 
\sc{truss4} & 37 & 13 & 52 & 0 &   10 &   10 &  \winner  9 &   0.00014 &  \winner  0.000131 &   0.000295 &   0.0014 &  \winner  0.00131 &   0.00265\\ 
\sc{hinf1} & 50 & 25 & 116 & 0 &   28 &  \winner  26 &   - &   0.000147 &  \winner  0.000134 &   - &   0.00413 &  \winner  0.00348 &   -\\ 
\sc{hinf2} & 65 & 31 & 154 & 0 &   - &  \winner  18 &   - &   - &  \winner  0.000171 &   - &   - &  \winner  0.00308 &   -\\ 
\sc{hinf3} & 65 & 31 & 154 & 0 &   - &  \winner  19 &   - &   - &  \winner  0.000168 &   - &   - &  \winner  0.00319 &   -\\ 
\sc{hinf4} & 65 & 31 & 154 & 0 &   24 &  \winner  22 &   - &   0.000176 &  \winner  0.000162 &   - &   0.00422 &  \winner  0.00356 &   -\\ 
\sc{hinf5} & 65 & 31 & 154 & 0 &  \winner  24 &   - &   - &  \winner  0.000185 &   - &   - &  \winner  0.00443 &   - &   -\\ 
\sc{hinf6} & 65 & 31 & 154 & 0 &  \winner  23 &   24 &   - &   0.000182 &  \winner  0.000168 &   - &   0.00419 &  \winner  0.00403 &   -\\ 
\sc{hinf8} & 65 & 31 & 154 & 0 &   - &  \winner  19 &   - &   - &  \winner  0.000167 &   - &   - &  \winner  0.00318 &   -\\ 
\sc{hinf9} & 65 & 31 & 154 & 0 &  \winner  24 &   - &   - &  \winner  0.000191 &   - &   - &  \winner  0.00459 &   - &   -\\ 
\sc{control1} & 81 & 36 & 375 & 0 &   54 &  \winner  26 &   - &   0.000415 &  \winner  0.000348 &   - &   0.0224 &  \winner  0.00905 &   -\\ 
\sc{truss3} & 89 & 28 & 120 & 0 &   12 &   12 &  \winner  10 &   0.000356 &  \winner  0.000349 &   0.000609 &   0.00427 &  \winner  0.00419 &   0.00609\\ 
\sc{hinf10} & 95 & 57 & 262 & 0 &  \winner  26 &   - &   - &  \winner  0.000253 &   - &   - &  \winner  0.00658 &   - &   -\\ 
\sc{hinf11} & 156 & 101 & 557 & 0 &  \winner  25 &   - &   - &  \winner  0.000465 &   - &   - &  \winner  0.0116 &   - &   -\\ 
\sc{hinf12} & 219 & 158 & 809 & 0 &  \winner  45 &  \winner  45 &   - &   0.00064 &  \winner  0.000636 &   - &   0.0288 &  \winner  0.0286 &   -\\ 
\sc{truss2} & 298 & 124 & 699 & 0 &  \winner  14 &   16 &   - &   0.00121 &  \winner  0.000999 &   - &   0.017 &  \winner  0.016 &   -\\ 
\sc{control2} & 311 & 121 & 2700 & 0 &  \winner  25 &  \winner  25 &   - &   0.00337 &  \winner  0.00333 &   - &   0.0843 &  \winner  0.0833 &   -\\ 
\sc{qap5} & 351 & 136 & 1026 & 0 &  \winner  9 &  \winner  9 &  \winner  9 &   0.00672 &   0.00643 &  \winner  0.0025 &   0.0605 &   0.0578 &  \winner  0.0225\\ 
\sc{truss7} & 451 & 86 & 863 & 0 &   22 &   22 &  \winner  20 &   0.0016 &  \winner  0.00158 &   0.00273 &   0.0353 &  \winner  0.0348 &   0.0547\\ 
\sc{qap6} & 703 & 229 & 1981 & 0 &  \winner  23 &  \winner  23 &   - &  \winner  0.0205 &   0.0221 &   - &  \winner  0.47 &   0.507 &   -\\ 
\sc{control3} & 691 & 256 & 8850 & 0 &  \winner  29 &  \winner  29 &   - &  \winner  0.0129 &   0.0142 &   - &  \winner  0.373 &   0.411 &   -\\ 
\sc{truss6} & 901 & 172 & 1726 & 0 &   25 &   25 &  \winner  22 &   0.00321 &  \winner  0.00312 &   0.00816 &   0.0803 &  \winner  0.0779 &   0.179\\ 
\sc{theta1} & 1275 & 104 & 153 & 0 &   12 &   12 &  \winner  8 &   0.0726 &   0.0707 &  \winner  0.0255 &   0.871 &   0.849 &  \winner  0.204\\ 
\sc{qap7} & 1275 & 358 & 3480 & 0 &  \winner  30 &   - &   - &  \winner  0.0696 &   - &   - &  \winner  2.09 &   - &   -\\ 
\sc{mcp124\_1} & 974 & 662 & 1200 & 0 &   12 &   13 &  \winner  7 &  \winner  0.00396 &   4.74 &   2.09 &  \winner  0.0475 &   61.7 &   14.6\\ 
\sc{control4} & 1221 & 441 & 20700 & 0 &  \winner  32 &  \winner  32 &   - &   0.0393 &  \winner  0.0392 &   - &   1.26 &  \winner  1.26 &   -\\ 
\sc{truss5} & 1816 & 208 & 2823 & 0 &   18 &   18 &  \winner  16 &   0.0336 &   0.0332 &  \winner  0.0099 &   0.605 &   0.597 &  \winner  0.158\\ 
\sc{control5} & 1901 & 676 & 40125 & 0 &  \winner  30 &  \winner  30 &   - &   0.104 &  \winner  0.101 &   - &   3.11 &  \winner  3.03 &   -\\ 
\sc{qap8} & 2145 & 529 & 5697 & 0 &  \winner  32 &  \winner  32 &   - &   0.22 &  \winner  0.214 &   - &   7.03 &  \winner  6.86 &   -\\ 
\sc{control6} & 2731 & 961 & 69000 & 0 &  \winner  36 &  \winner  36 &   - &  \winner  0.223 &   0.231 &   - &  \winner  8.03 &   8.31 &   -\\ 
\sc{control7} & 3711 & 1296 & 109200 & 0 &  \winner  38 &  \winner  38 &   - &   0.449 &  \winner  0.445 &   - &   17.1 &  \winner  16.9 &   -\\ 
\sc{gpp100} & 5050 & 101 & 5150 & 0 &  \winner  26 &  \winner  26 &   - &  \winner  1.66 &   1.68 &   - &  \winner  43.2 &   43.7 &   -\\ 
\sc{mcp100} & 3168 & 2207 & 4314 & 0 &   11 &   11 &  \winner  7 &  \winner  0.0378 &   1.65 &   0.727 &  \winner  0.416 &   18.2 &   5.09\\ 
\sc{theta2} & 5050 & 498 & 597 & 0 &   10 &   10 &  \winner  9 &   1.78 &   1.81 &  \winner  0.706 &   17.8 &   18.1 &  \winner  6.36\\ 
\sc{mcp250\_1} & 3307 & 2289 & 4328 & 0 &   12 &   - &  \winner  9 &  \winner  0.0277 &   - &   94.5 &  \winner  0.333 &   - &    850\\ 
\sc{qap10} & 5151 & 1021 & 13101 & 0 &  \winner  25 &  \winner  25 &   - &   1.74 &  \winner  1.72 &   - &   43.6 &  \winner    43 &   -\\ 
\sc{control8} & 4841 & 1681 & 162600 & 0 &  \winner  40 &  \winner  40 &   - &  \winner  0.844 &   0.864 &   - &  \winner  33.8 &   34.5 &   -\\ 
\sc{truss8} & 6271 & 496 & 8286 & 0 &   - &   - &  \winner  14 &   - &   - &  \winner  0.0606 &   - &   - &  \winner  0.849\\ 
\sc{mcp124\_2} & 4120 & 2837 & 5550 & 0 &   11 &   12 &  \winner  8 &  \winner  0.0852 &   4.77 &   2.04 &  \winner  0.937 &   57.3 &   16.3\\ 
\sc{gpp124\_1} & 7750 & 125 & 7874 & 0 &  \winner  30 &  \winner  30 &   - &   4.74 &  \winner   4.7 &   - &    142 &  \winner   141 &   -\\ 
\sc{gpp124\_2} & 7750 & 125 & 7874 & 0 &   28 &   28 &  \winner  26 &   4.75 &   4.71 &  \winner  1.82 &    133 &    132 &  \winner  47.4\\ 
\sc{gpp124\_3} & 7750 & 125 & 7874 & 0 &   - &   - &  \winner  24 &   - &   - &  \winner  1.83 &   - &   - &  \winner  43.9\\ 
\sc{gpp124\_4} & 7750 & 125 & 7874 & 0 &   33 &   33 &  \winner  28 &   4.64 &    4.6 &  \winner  1.82 &    153 &    152 &  \winner    51\\ 
\sc{control9} & 6121 & 2116 & 231075 & 0 &  \winner  35 &  \winner  35 &   - &   1.49 &  \winner  1.47 &   - &   52.2 &  \winner  51.4 &   -\\ 
\sc{control10} & 7551 & 2601 & 316500 & 0 &  \winner  43 &  \winner  43 &   - &    2.4 &  \winner  2.38 &   - &    103 &  \winner   102 &   -\\ 
\sc{theta3} & 11325 & 1106 & 1255 & 0 &   10 &   10 &  \winner  9 &   15.7 &   13.6 &  \winner  5.84 &    157 &    136 &  \winner  52.6\\ 
\sc{arch2} & 9230 & 6225 & 15522 & 0 &   24 &  \winner  23 &   - &  \winner  0.14 &   19.2 &   - &  \winner  3.36 &    441 &   -\\ 
\sc{arch4} & 9230 & 6225 & 15522 & 0 &  \winner  20 &   23 &   - &  \winner  0.143 &   19.1 &   - &  \winner  2.85 &    440 &   -\\ 
\sc{arch8} & 9230 & 6225 & 15522 & 0 &   24 &  \winner  23 &   - &  \winner  0.139 &   19.5 &   - &  \winner  3.35 &    448 &   -\\ 
\sc{mcp124\_3} & 9530 & 6634 & 13144 & 0 &   14 &   13 &  \winner  8 &  \winner  0.439 &   4.74 &   2.04 &  \winner  6.15 &   61.6 &   16.3\\ 
\sc{theta4} & 20100 & 1949 & 2148 & 0 &  \winner  10 &  \winner  10 &   11 &   63.5 &   63.2 &  \winner  26.2 &    635 &    632 &  \winner   288\\ 
\sc{mcp250\_2} & 13678 & 9449 & 18648 & 0 &   11 &   - &  \winner  8 &  \winner  0.785 &   - &   95.7 &  \winner  8.64 &   - &    766\\ 
\sc{control11} & 14105 & 9296 & 433070 & 0 &  \winner  40 &   45 &   - &  \winner   3.9 &   4.46 &   - &  \winner   156 &    201 &   -\\ 
\sc{arch0} & 13743 & 10172 & 23026 & 0 &  \winner  22 &   26 &   - &  \winner  0.372 &   19.1 &   - &  \winner  8.17 &    496 &   -\\ 
\sc{mcp124\_4} & 14594 & 9546 & 18968 & 0 &   12 &   12 &  \winner  7 &  \winner  1.85 &   4.74 &   2.11 &   22.2 &   56.9 &  \winner  14.8\\ 
\sc{theta5} & 31375 & 3028 & 3277 & 0 &   - &   - &  \winner  10 &   - &   - &  \winner  94.1 &   - &   - &  \winner   941\\ 
\sc{mcp250\_3} & 40617 & 29866 & 59482 & 0 &   13 &   - &  \winner  8 &  \winner  11.1 &   - &   95.4 &  \winner   145 &   - &    763\\ 
\sc{mcp250\_4} & 59628 & 41799 & 83348 & 0 &   14 &   - &  \winner  9 &  \winner  47.4 &   - &   94.4 &  \winner   664 &   - &    850\\ 
\end{longtable}
\end{landscape}

\end{document}